\newcommand{\Eproof}{\hfill$\square$}
\newcommand{\R}{\mathbb{R}}
\newcommand{\abs}[1]{\left\vert#1\right\vert}
\newcommand{\set}[1]{\left\{#1\right\}}
\newcommand{\norm}[1]{\left\Vert#1\right\Vert}
\newcommand{\iprods}[1]{\langle#1\rangle}
\newcommand{\eofprove}{\hfill $\square$}
\newcommand{\Fsc}{\mathcal{F}_{\mathrm{scl}}}
\newcommand{\prox}{\mathrm{prox}}
\newcommand{\Sc}{\mathcal{S}}
\newcommand{\Pc}{\mathcal{P}}
\newcommand{\Fs}{$\mathcal{F}_{\rm scl} $}
\newcommand{\xb}{\mathbf{x}}
\newcommand{\Db}{\mathbf{D}}
\newcommand{\xopt}{\mathbf{x}^{\star}}
\newcommand{\yb}{\mathbf{y}}
\newcommand{\zb}{\mathbf{z}}
\newcommand{\ub}{\mathbf{u}}
\renewcommand{\sb}{\mathbf{s}}
\newcommand{\vb}{\mathbf{v}}
\newcommand{\db}{\mathbf{d}}
\newcommand{\wb}{\mathbf{w}}
\newcommand{\Hb}{\mathbf{H}}
\newcommand{\Xb}{\mathbf{X}}
\newcommand{\dom}[1]{\mathrm{dom}\left(#1\right)}
\begin{document}

\mainmatter  

\title{\vspace{-2ex}Composite convex minimization involving self-concordant-like cost functions\vspace{-2ex}\footnote{The first version was uploaded on Feb 04, 2015. This is an updated version, which corrects a mistake in Lemma 1.}}
 
\titlerunning{Composite convex minimization involving self-concordant-like cost functions}

%
%
\author{\vspace{-0ex}Quoc Tran-Dinh, Yen-Huan Li 
\textit{\and} Volkan Cevher}
\authorrunning{Quoc Tran-Dinh, Yen-Huan Li \textit{and} Volkan Cevher}

\institute{\vspace{-2ex}Laboratory for Information and Inference Systems (LIONS)\\
 EPFL, Lausanne, Switzerland}
 
\toctitle{Composite convex minimization involving self-concordant-like cost functions}
\tocauthor{Quoc Tran-Dinh and Volkan Cevher}
\maketitle

\vspace{-2ex}
\begin{abstract}
The self-concordant-like property of a smooth convex function is a new analytical structure that generalizes the self-concordant notion.
While a wide variety of important applications feature the self-concordant-like property, this concept has heretofore remained unexploited in convex optimization. 
To this end, we develop a variable metric framework of minimizing the sum of a ``simple'' convex function and a self-concordant-like function.  
We introduce a new analytic step-size selection procedure and prove that the basic gradient algorithm has improved convergence guarantees as compared to ``fast'' algorithms that rely on the Lipschitz gradient property. 
Our numerical tests with real-data sets show that the practice indeed follows the theory. 
\end{abstract}

\vspace{-4ex}
\section{Introduction}\label{sec:intro}
\vspace{-2ex}
In this paper, we consider the following composite convex minimization problem:
\begin{equation}\label{eq:composite_min_Fx}
F^{\star} := \min_{\xb\in\mathbb{R}^n}\set{F(\xb) := f(\xb) + g(\xb)},
\end{equation}
where $f$ is a nonlinear smooth convex function, while $g$ is a ``simple'' possibly nonsmooth convex function.
Such composite convex problems naturally arise in many applications of machine learning, data sciences, and imaging science.
Very often, $f$ measures a data fidelity or a loss function, and $g$ encodes a form of low-dimensionality, such as sparsity or low-rankness. 

To trade-off accuracy and computation optimally in large-scale instances of \eqref{eq:composite_min_Fx}, 
existing optimization methods invariably invoke the additional assumption that the smooth function $f$ also has an $L$-Lipschitz continuous gradient (cf., \cite{Nesterov2007} for the definition).
A highlight is the recent developments on proximal gradient methods, which feature (nearly) dimension-independent, global sublinear convergence rates \cite{Beck2009,Mine1981,Nesterov2007}. 
When the smooth $f$ in  \eqref{eq:composite_min_Fx} also has strong regularity \cite{Robinson1980}, the problem \eqref{eq:composite_min_Fx} is also within the theoretical and practical grasp of proximal-(quasi) Newton algorithms with linear, superlinear, and quadratic convergence rates \cite{Becker2012b,Lee2014,Schmidt2011}. These algorithms specifically exploit second order information or its principled approximations (e.g., via BFGS or L-BFGS updates \cite{Nocedal2006}). 

In this paper, we do away with the Lipschitz gradient assumption and instead focus on another structural assumption on $f$ in developing an algorithmic framework for \eqref{eq:composite_min_Fx}, which is defined below.

\begin{definition}\label{de:self_concordant_type}
A convex function $f\in \mathcal{C}^3(\R^n)$ is called a self-concordant-like function $f \in \mathcal{F}_{\rm scl}$, if: 
\begin{equation}\label{eq:self_concordant_type}
\vert\varphi'''(t)\vert \leq M_f\varphi''(t) \norm{\ub}_2,
\end{equation}
for $t \in \mathbb{R}$ and $M_f >0$, where $\varphi(t) := f(\xb + t\ub)$ for any $\xb\in\mathrm{dom}(f)$ and $\ub \in \mathbb{R}^n$. 
\end{definition}

Definition \ref{de:self_concordant_type} mimics the standard self-concordance concept (\cite[Definition 4.1.1]{Nesterov2004})  and was first discussed in \cite{Bach2009} for model consistency in logistic regression. 
For composite convex minimization, self-concordant-like functions abound in machine learning, including but not limited to logistic regression, multinomial logistic regression, conditional random fields, and robust regression (cf., the references in \cite{Bach2013a}). 
In addition, special instances of geometric programming \cite{Boyd2004} can also be recast as \eqref{eq:composite_min_Fx} where $f \in \mathcal{F}_{\rm scl}$. 

The importance of the assumption $f \in \mathcal{F}_{\rm scl}$ in  \eqref{eq:composite_min_Fx} is twofold. 
First, it enables us to derive an explicit step-size selection strategy for proximal variable metric methods, enhancing backtracking-line search operations with improved theoretical convergence guarantees. For instance, we can prove that our proximal gradient method can automatically adapt to the local strong convexity of $f$ near the optimal solution to feature linear convergence under mild conditions. This theoretical result is backed up by great empirical performance on real-life problems where the fast Lipschitz-based methods actually exhibit sublinear convergence (cf. Section \ref{sec:app}). Second, the self-concordant-like assumption on $f$ also helps us provide scalable numerical solutions of \eqref{eq:composite_min_Fx} for specific problems where $f$ does not have Lipschitz continuous gradient, such as special forms of geometric programming problems. 

\vspace{0.5ex}
\noindent\textbf{Contributions. } Our specific contributions can be summarized as follows:
\begin{enumerate}
\vspace{-1ex}
\item We propose a new \textit{variable metric} framework for minimizing the sum $f+g$ of a self-concordant-like function $f$ and a convex, possibly nonsmooth function $g$. Our approach relies on the solution of a convex subproblem obtained by linearizing and regularizing the first term $f$, and uses an \textit{analytical} step-size to achieve descent in three classes of algorithms: first order methods, second order methods, and quasi-Newton methods. \vspace{-0mm}

  \item We establish both the global and the local convergence of different variable metric strategies. We pay particular attention to diagonal variable metrics since in this case many of the proximal subproblems can be solved exactly. We derive conditions on when and where these variants achieve locally linear convergence. When the variable metric is the Hessian of $f$ at  each iteration, we show that the resulting algorithm locally exhibits quadratic convergence without requiring any globalization strategy such as a backtracking line-search.\vspace{0mm}
 
 \item We apply our algorithms to large-scale real-world  and synthetic problems to highlight the strengths and the weaknesses of our variable-metric scheme. \vspace{-1ex}
\end{enumerate}

\noindent\textbf{Relation to prior work.} 
Many of the composite problems with self-concordant-like $f$, such as regularized logistics and multinomial logistics, also have Lipschitz continuous gradient. In those specific instances, many theoretically efficient algorithms are applicable \cite{Beck2009,Becker2012b,Lee2014,Mine1981,Nesterov2007,Schmidt2011}. 
Compared to these works, our framework has theoretically stronger local  convergence guarantees thanks to the specific step-size strategy matched with $f \in \mathcal{F}_{\rm scl}$. The authors of \cite{Tran-Dinh2013a} consider composite problems where $f$ is standard self-concordant and proposes a proximal Newton algorithm optimally exploiting this structure. Our structural assumptions and algorithmic emphasis here are different.

\vspace{0.25ex}
\noindent\textbf{Paper organization.}
We first introduce the basic definitions and optimality conditions before deriving the variable metric strategy in Section \ref{sec:composite_opt}. 
Section \ref{sec:alg_framework} proposes our new variable metric framework, describes its step-size selection procedure, and establishes the convergence theory of its variants. Section \ref{sec:app} illustrates our framework in real and synthetic data. 
\vspace{-2ex}
\section{Preliminaries}\label{sec:composite_opt}
\vspace{-2ex}
We adopt the notion of self-concordant functions in \cite{Nesterov2004,Nesterov1994} to a different smooth function class. 
Then we present the optimality condition of problem \eqref{eq:composite_min_Fx}.

\vspace{-3ex}
\subsection{Basic definitions}
\vspace{-1.5ex}
Let $g : \R^n \to \R$ be a proper, lower semicontinuous convex function \cite{Rockafellar1970} and $\mathrm{dom}(g)$ denote the domain of $g$. 
We use $\partial{g}(\xb)$ to denote the subdifferential of $g$ at $\xb\in\dom{g}$ if $g$ is nondifferentiable at $\xb$ and $\nabla{g}(\xb)$ to denote its gradient, otherwise.
Let $f: \R^n \to \R$ be a $\mathcal{C}^3(\mathrm{dom}(f))$ function (i.e., $f$ is three times continuously differentiable). 
We denote by $\nabla{f}(\xb)$ and $\nabla^2f(\xb)$ the gradient and the Hessian of $f$ at $\xb$, respectively. 
Suppose that, for a given $\xb\in\dom{f}$, $\nabla^2f(\xb)$ is positive definite (i.e., $\nabla^2f(\xb)\in\Sc^n_{++}$), we define the local norm of a given vector $\ub\in\R^n$ as $\norm{\ub}_{\xb} := [\ub^T\nabla^2f(\xb)\ub]^{1/2}$. 
The corresponding dual norm of $\ub$, $\norm{\ub}^{*}_{\xb}$ is defined as $\norm{\ub}^{*}_{\xb} := \max\set{ \ub^T\vb ~|~ \norm{\vb}_{\xb} \leq 1} = [\ub^T\nabla^2f(\xb)^{-1}\ub]^{1/2}$.   

\vspace{-2ex}
\subsection{Composite self-concordant-like minimization}\label{subsec:min_composite}
\vspace{-1ex}
Let $f\in$\Fs$(\R^n)$ and $g$ be proper, closed and convex. 
The optimality condition for \eqref{eq:composite_min_Fx} can be concisely written as follows:
\begin{equation}\label{eq:optimality2}
0 \in \nabla{f}(\xopt) + \partial{g}(\xopt).
\end{equation}
Let us denote by $\xopt$ as an optimal solution of \eqref{eq:composite_min_Fx}. 
Then, the condition \eqref{eq:optimality2} is necessary and sufficient.
We also say that $\xopt$ is \textit{nonsingular} if $\nabla^2f(\xopt)$ is positive definite.
We now establish the existence and uniqueness of the solution $\xopt$ of \eqref{eq:composite_min_Fx}, whose proof can be found in the appendix.

\vspace{-1ex}
\begin{lemma}\label{le:existence_sol}
Suppose that $f\in$\Fs$(\R^n)$ satisfies Definition \ref{de:self_concordant_type} for some $M_f > 0$. 
Suppose further that $\nabla^2f(\xb) \succ 0$ for some $\xb\in\mathrm{dom}(f)$. 
In addition, $\lambda(\xb) := \Vert \nabla{f}(\xb) + \vb\Vert_{\xb}^{\ast} < \frac{\sqrt{\underline{\sigma}(\xb)}}{M_f}$ for some $\vb\in\partial{g}(\xb)$, where $\underline{\sigma}(\xb) := \lambda_{\min}(\nabla^2{f}(\xb))$, the smallest eigenvalue of $\nabla^2{f}(\xb)$.
Then  the solution $\xopt$ of \eqref{eq:composite_min_Fx} exists and is unique.
\end{lemma}
\vspace{-1ex}
For a given symmetric positive definite matrix $\Hb$, we define a generalized proximal operator $\prox_{\Hb^{-1}g}$ as:
\begin{equation}\label{eq:prox_oper}
\prox_{\Hb^{-1}g}(\xb) := \mathrm{arg}\min_{\zb}\big\{g(\zb) + (1/2)\norm{\zb - \xb}_{\Hb^{-1}}^2\big\}.
 \end{equation}
 Due to the convexity of $g$, this operator is well-defined and single-valued. If we can compute $\prox_{\Hb^{-1}g}$ efficiently (e.g., by a closed form or by polynomial time algorithms), then we say that $g$ is \textit{proximally tractable}. Examples of proximal tractability convex functions can be found, e.g., in \cite{Parikh2013}.
 Using $\prox_{\Hb^{-1}g}$, we can write condition \eqref{eq:composite_min_Fx} as:
\begin{equation*}
\xopt - \Hb^{-1}\nabla{f}(\xopt) \in (\mathbb{I} + \Hb^{-1}\partial{g})(\xopt)  \iff \xopt = \prox_{\Hb^{-1}g}(\xopt - \Hb^{-1}\nabla{f}(\xopt)).
\end{equation*}
This expression shows that $\xopt$  is a fixed point of $\mathcal{R}_{\Hb}(\cdot) := \prox_{\Hb^{-1}g}((\cdot) - \Hb^{-1}\nabla{f}(\cdot))$. 
Based on the fixed point principle, one can expect that the iterative sequence $\set{\xb^k}_{k\geq 0}$ generated by $\xb^{k+1} := \mathcal{R}_{\Hb}(x^k)$  converges to $\xopt$. This observation is made rigorous below.
\vspace{-2ex}
\section{Our variable metric framework}\label{sec:alg_framework}
\vspace{-1.5ex}
We first present a generic variable metric proximal framework for solving \eqref{eq:composite_min_Fx}. 
Then, we specify this framework to obtain three variants: proximal gradient, proximal Newton and proximal quasi-Newton algorithms.

\vspace{-2ex}
\subsection{Generic variable metric proximal algorithmic framework}
\vspace{-1ex}
Given $\xb^k\in\dom{F}$ and an appropriate choice $\Hb_k\in\Sc^n_{++}$, since $f\in\Fsc$, one can approximate $f$ at $\xb^k$ by the following quadratic model:
\begin{equation}\label{eq:Q_model}
Q_{\Hb_k}(\xb,\xb^k) := f(\xb^k) + \iprods{\nabla{f}(\xb^k), \xb - \xb^k} + \frac{1}{2}\iprods{\Hb_k(\xb-\xb^k), \xb - \xb_k}.
\end{equation}
Our algorithmic approach uses the variable metric forward-backward framework to generate a sequence $\set{\xb^k}_{k\geq 0}$ starting from $\xb^0\in\dom{F}$ and update:
\begin{equation}\label{eq:x_k1}
\xb^{k+1} := \xb^k + \alpha_k\db^k
\end{equation}
where $\alpha_k \in (0, 1]$ is a given step-size and $\db^k$ is a search direction defined by:
\begin{equation}\label{eq:subprob1}
\db^k := \sb^k - \xb^k, ~~\text{with}~~\sb^k  := \mathrm{arg}\!\min_{\xb}\set{Q_{\Hb_k}(\xb,\xb^k) + g(\xb)}.
\end{equation} 
In the rest of this section, we explain how to determine the step size $\alpha_k$ in the iterative scheme \eqref{eq:x_k1} optimally for special cases of $\Hb_k$. 
For this, we need the following definitions:
\begin{equation}\label{eq:lbd_k_and_beta_k}
\lambda_k := \Vert\db^k\Vert_{\xb^k}, ~~ r_k := M_f\Vert\db^k\Vert_2, ~~\textrm{and}~~ \beta_k := \Vert\db^k\Vert_{\Hb_k} = \iprods{\Hb_k\db^k,\db^k}^{1/2}.
\end{equation}

\vspace{-2ex}
\subsection{Proximal-gradient algorithm}\label{subsec:proximal_gradient}
\vspace{-1ex}
When the variable matrix $\Hb_k$ is \textit{diagonal} and $g$ is proximally tractable, we can efficiently obtain the solution of the subproblem \eqref{eq:subprob1} in a distributed fashion or even in a closed form. 
Hence, we consider $\Hb_k = \Db_k := \mathrm{diag}(\Db_{k,1},\cdots, \Db_{k,n})$ with $\Db_{k,i} > 0$, for $i=1,\cdots, n$. 
Lemma \ref{le:grad_alg}, whose proof is in the appendix, 
provides a step-size selection procedure and proves the global convergence of this proximal-gradient algorithm.

\vspace{-1ex}
\begin{lemma}\label{le:grad_alg}
Let $\set{\xb^k}_{k\geq 0}$ be a sequence generated by \eqref{eq:x_k1} and \eqref{eq:subprob1} starting from $\xb^0\in\dom{F}$.
For $\lambda_k$, $r_k$ and $\beta_k$ defined by \eqref{eq:lbd_k_and_beta_k}, we consider the step-size $\alpha_k$ as:
\begin{equation}\label{eq:step_size}
\alpha_k :=   \frac{1}{r_k}\ln\left( 1 + \frac{\beta_k^2r_k}{\lambda_k^2}\right),
\end{equation}
If  $\beta_k^2r_k \leq (e^{r_k}-1)\lambda_k^2$, then $\alpha_k \in (0, 1]$ and:
\begin{equation}\label{eq:grad_descent}
F(\xb^{k+1}) \leq F(\xb^k) -  \frac{\beta_k^2}{r_k}\left[\left(1 + \frac{\lambda^2_k}{r_k\beta^2_k}\right)\ln\left(1 + \frac{\beta^2_kr_k}{\lambda^2_k}\right) - 1\right].
\end{equation}
Moreover, this step-size $\alpha_k$ is optimal $($w.r.t. the worst-case performance$)$.
\end{lemma}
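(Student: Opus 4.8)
The plan is to reduce the entire statement to a single one-dimensional descent bound extracted from the self-concordant-like structure, and then optimize over the step-size. First I would exploit Definition~\ref{de:self_concordant_type} along the segment $\xb^k + t\db^k$. Setting $\varphi(t) := f(\xb^k + t\db^k)$, the inequality \eqref{eq:self_concordant_type} reads $\abs{\varphi'''(t)} \leq M_f\norm{\db^k}_2\,\varphi''(t)$, i.e. $\abs{(\ln\varphi''(t))'} \leq M_f\norm{\db^k}_2 = r_k$. Integrating this bound on $\ln\varphi''$ gives $\varphi''(t) \leq \varphi''(0)e^{r_kt}$, and integrating twice more from $0$ to $\alpha$ yields the key inequality
\begin{equation*}
f(\xb^k + \alpha\db^k) \leq f(\xb^k) + \alpha\iprods{\nabla f(\xb^k), \db^k} + \frac{\lambda_k^2}{r_k^2}\left(e^{r_k\alpha} - r_k\alpha - 1\right),
\end{equation*}
since $\varphi''(0) = \iprods{\nabla^2 f(\xb^k)\db^k, \db^k} = \lambda_k^2$. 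This exponential estimate is the self-concordant-like replacement for the usual (quadratic) descent lemma and is the technical heart of the argument.

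Next I would control $g$ at the new point. Because $\alpha \in (0,1]$ and $\xb^k + \alpha\db^k = (1-\alpha)\xb^k + \alpha\sb^k$, convexity of $g$ gives $g(\xb^k + \alpha\db^k) \leq (1-\alpha)g(\xb^k) + \alpha g(\sb^k)$. Adding this to the inequality above and regrouping produces
\begin{equation*}
F(\xb^k + \alpha\db^k) - F(\xb^k) \leq \alpha\left[\iprods{\nabla f(\xb^k), \db^k} + g(\sb^k) - g(\xb^k)\right] + \frac{\lambda_k^2}{r_k^2}\left(e^{r_k\alpha} - r_k\alpha - 1\right).
\end{equation*}
To bound the bracket I would invoke the optimality condition for the subproblem \eqref{eq:subprob1}: there exists $\xi \in \partial g(\sb^k)$ with $\nabla f(\xb^k) + \Hb_k\db^k + \xi = 0$. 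The subgradient inequality $g(\xb^k) \geq g(\sb^k) + \iprods{\xi, \xb^k - \sb^k}$ together with $\xb^k-\sb^k=-\db^k$ then gives $\iprods{\nabla f(\xb^k), \db^k} + g(\sb^k) - g(\xb^k) \leq \iprods{\xi,\db^k} = -\iprods{\Hb_k\db^k, \db^k} = -\beta_k^2$. Hence $F(\xb^k + \alpha\db^k) - F(\xb^k) \leq \eta(\alpha)$, where $\eta(\alpha) := -\alpha\beta_k^2 + \lambda_k^2 r_k^{-2}\left(e^{r_k\alpha} - r_k\alpha - 1\right)$.

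It remains to optimize this one-dimensional bound. The function $\eta$ is strictly convex in $\alpha$, with $\eta'(\alpha) = -\beta_k^2 + (\lambda_k^2/r_k)(e^{r_k\alpha} - 1)$; solving $\eta'(\alpha)=0$ yields precisely $\alpha_k = r_k^{-1}\ln\!\left(1 + \beta_k^2 r_k/\lambda_k^2\right)$ of \eqref{eq:step_size}, which both proves its worst-case optimality (it minimizes the guaranteed descent bound $\eta$) and shows $\alpha_k>0$. The constraint $\alpha_k \leq 1$ is equivalent to $1 + \beta_k^2 r_k/\lambda_k^2 \leq e^{r_k}$, i.e. exactly to the hypothesis $\beta_k^2 r_k \leq (e^{r_k}-1)\lambda_k^2$. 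Finally, substituting $e^{r_k\alpha_k} = 1 + \beta_k^2 r_k/\lambda_k^2$ back into $\eta(\alpha_k)$ and collecting terms reproduces the right-hand side of \eqref{eq:grad_descent}, which completes the proof. The only genuinely delicate step is the double integration of the second-derivative bound that produces the exponential descent estimate; everything afterward is the convex-analysis bookkeeping of the subproblem together with an elementary single-variable optimization.
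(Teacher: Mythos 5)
Your proposal is correct and follows essentially the same route as the paper: the exponential descent estimate you obtain by integrating $\abs{(\ln\varphi'')'}\le r_k$ twice is exactly the paper's inequality \eqref{eq:pro4} specialized to $\yb=\xb^k+\alpha\db^k$, the treatment of $g$ via convexity plus the subproblem's optimality condition is identical, and minimizing your $\eta(\alpha)$ is the same one-dimensional optimization as maximizing the paper's $\psi_k(\tau)=\beta_k^2\tau-\lambda_k^2\omega(r_k\tau)\tau^2$. The only cosmetic difference is that you re-derive the key bound from Definition~\ref{de:self_concordant_type} rather than citing the pre-established property.
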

\vspace{-1ex}

By our condition, the second term on the right-hand side of \eqref{eq:grad_descent} is always positive, establishing that the sequence $\set{F(\xb^k)}$ is decreasing. 
Moreover, as $e^{r_k} - 1 \geq r_k$, the condition $\beta_k^2r_k \leq (e^{r_k}-1)\lambda_k^2$ can be simplified to $\beta_k \leq \lambda_k$. It is easy to verify that this is satisfied whenever $\Db_k\preceq \nabla^2f(\xb^k)$. 
In such cases, our step-size selection ensures the best decrease of the objective value regarding the self-concordant-like structure of $f$ (and not the actual objective instance). 
When  $\beta_k > \lambda_k$, we scale down $\Db_k$ until $\beta_k \leq \lambda_k$. 
It is easy to prove that the number of backtracking steps to find $\Db_{k,i}$ is time constant. 

Now, by using our step-size \eqref{eq:step_size}, we can describe the proximal-gradient algorithm as in Algorithm \ref{alg:A1}.
\begin{algorithm}[!ht]\caption{(Proximal-gradient algorithm with a \textit{diagonal variable metric})}\label{alg:A1}
\begin{algorithmic}
   \STATE {\bfseries Initialization:} Given $\xb^0\in\mathrm{dom}(F)$, and a tolerance $\varepsilon > 0$.
   \FOR{$k = 0$ {\bfseries to} $k_{\max}$}
	\STATE 1.  Choose $\Db_k\in\Sc^n_{++}$ (e.g., using $\Db_k := L_k\mathbb{I}$, where $L_k$ is given by \eqref{eq:Barzilai_BenTal}).
	\STATE 2.  Compute the proximal-gradient search direction $\db^k$ as \eqref{eq:subprob1}.
	\STATE 3.  Compute $\beta_k := \Vert\db^k\Vert_{\Db_k}$, $r_k := M_f\Vert\db^k\Vert_2$ and $\lambda_k := \Vert\db^k\Vert_{\xb^k}$.
	\STATE 4.  If $\beta_k \leq \varepsilon$ then terminate.
	\STATE 5.  If $\beta_k^2r_k \leq (e^{r_k}-1)\lambda_k^2$, then compute $\alpha_k := \frac{1}{r_k}\ln\left(1 + \frac{\beta_k^2r_k}{\lambda_k^2}\right)$ and update $\xb^{k+1} := \xb^k + \alpha_k\db^k$. Otherwise, set $\xb^{k+1} := \xb^k$ and update $\Db_{k+1}$ from $\Db_k$.
   \ENDFOR
\end{algorithmic}
\end{algorithm}

We combine the above analysis to obtain the following proximal gradient algorithm for solving \eqref{eq:composite_min_Fx}.
The main step  in Algorithm \ref{alg:A1} is to compute the search direction $\db^k$ at Step 2, which is  equivalent to the solution of the convex subproblem \eqref{eq:subprob1}. 
The second main step is to compute $\lambda_k = \iprods{\nabla^2f(\xb^k)\db^k, \db^k}^{1/2}$. 
This quantity requires the product of Hessian $\nabla^2f(\xb^k)$ of $f$ and $\db^k$, but not the full-Hessian.
It is clear that if $\beta_k = 0$ then $\db^k = 0$ and $\xb^{k+1} \equiv \xb^k$ and we obtain the  solution of \eqref{eq:composite_min_Fx}, i.e., $\xb^k \equiv \xopt$.
The diagonal matrix $\Db_k$ can be updated as $\Db_{k+1} := c\Db_k$ for a given factor $c > 1$.

We now explain how the new theory enhances the standard backtracking linesearch approaches. 
For simplicity, let us assume $\Db_k := L_k\mathbb{I}$, where $\mathbb{I}$ is the identity matrix. 
By a careful inspection of \eqref{eq:grad_descent}, we see that $L_k = \sigma_{\max}(\nabla^2f(\xb^k))$ achieves the maximum guaranteed decrease (in the worst case sense) in the objective. 
There are many principled ways of approximating this constant based on the secant equation underlying the quasi-Newton methods. 
In Section \ref{sec:app}, we use Barzilai-BenTal's rule:
\begin{equation}\label{eq:Barzilai_BenTal}
L_k := \frac{\Vert\yb^k\Vert^2_2}{\iprods{\yb^k, \sb^k}}, ~\textrm{where}~\sb^k := \xb^k - \xb^{k\!-\!1} ~\text{and}~\yb^k := \nabla{f}(\xb^k) \!-\! \nabla{f}(\xb^{k\!-\!1}).
\end{equation}
We then deviate from the standard backtracking approaches. 
As opposed to, for instance, checking the Armijo-Goldstein condition, we use a \emph{new analytic condition} (i.e., Step 5 of Algorithm \ref{alg:A1}), which is computationally cheaper in many cases. 
Our analytic step-size then further refines the solution based on the worst-case problem structure, even if the backtracking update satisfies the Armijo-Goldstein condition.

Surprisingly, our analysis also enables us to also establish local linear convergence as described in Theorem \ref{th:convergence_of_prox_grad} under mild assumptions. 
The proof can be found in appendix. 

\vspace{-1ex}
\begin{theorem}\label{th:convergence_of_prox_grad}
Let $\set{\xb^k}_{k\geq 0}$ be a sequence generated by Algorithm \ref{alg:A1}. 
Suppose that the sub-level set $\mathcal{L}_F(F(\xb^0)) := \set{\xb \in\dom{F} : F(\xb) \leq F(\xb^0)}$ is bounded and $\nabla^2f$ is nonsingular at some $\xb\in\dom{f}$.
Suppose further that $\Db_k := L_k\mathbb{I} \succeq \tau \mathbb{I}_n$ for given $\tau > 0$.
Then,  $\set{\xb^k}$ converges to $\xopt$ the solution of \eqref{eq:composite_min_Fx}.
Moreover, if $\rho_{*} := \max\set{L_k/\sigma_{\min}^{*}-1, 1 - L_k/\sigma_{\max}^{*}} < \frac{1}{2}$  for $k$ sufficiently large then the sequence $\set{\xb^k}$ locally converges to $\xopt$ at a linear rate, where $\sigma_{\min}^*$ and $\sigma_{\max}^*$ are the smallest and the largest eigenvalues of  $\nabla^2f(\xopt)$, respectively.  
\end{theorem}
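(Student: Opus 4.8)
The plan is to prove the two assertions in turn: first the global convergence of $\set{\xb^k}$ to $\xopt$, then the local linear rate under the spectral condition on $\rho_{*}$. Before either part I would record a structural fact that I expect to reuse and that also underlies Lemma \ref{le:existence_sol}: for $f\in\Fsc$ the Hessian stays positive definite throughout $\dom{f}$ once it is so at a single point. Indeed, with $\varphi(t)=f(\xb+t\ub)$, the inequality \eqref{eq:self_concordant_type} reads $\abs{(\ln\varphi''(t))'}=\abs{\varphi'''(t)/\varphi''(t)}\le M_f\norm{\ub}_2$, so $\varphi''$ cannot vanish in finite ``time''; hence $\nabla^2f(\xb)\succ0$ propagates to $\nabla^2f(\xopt)\succ0$, and on the bounded sub-level set $\mathcal{L}_F(F(\xb^0))$ it yields uniform eigenvalue bounds $0<\sigma_{\min}\le\sigma_{\max}<\infty$ for $\nabla^2f$.

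\textbf{Global convergence.} By Lemma \ref{le:grad_alg} the accepted iterations make $\set{F(\xb^k)}$ nonincreasing, so the sequence stays in the compact set $\mathcal{L}_F(F(\xb^0))$. Summing \eqref{eq:grad_descent} gives $\sum_k\Delta_k\le F(\xb^0)-F^{\star}<\infty$, where $\Delta_k$ is the nonnegative right-hand decrease, hence $\Delta_k\to0$. Writing $\Delta_k=(\beta_k^2/r_k)\psi(u_k)$ with $u_k:=\beta_k^2r_k/\lambda_k^2$ and $\psi(u):=(1+1/u)\ln(1+u)-1$, and using that the acceptance mechanism enforces $\beta_k\le\lambda_k$ while $\Db_k\succeq\tau\mathbb{I}$ forces $\beta_k\ge\sqrt{\tau}\norm{\db^k}_2$, the quantities $\lambda_k,r_k,\beta_k$ stay bounded and $\psi(u)=\Theta(u)$ near $0$; a short estimate such as $\psi(u)\ge u/[2(1+u)]$ then gives $\Delta_k\ge c\,\beta_k^4$ for some $c>0$, so $\beta_k\to0$ and thus $\db^k\to0$. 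Finally I would pass to the limit in the subproblem optimality condition $0\in\nabla f(\xb^k)+\Db_k\db^k+\partial g(\sb^k)$: the same test bounds $L_k\le\sigma_{\max}$, so $\Db_k\db^k\to0$, and closedness of $\partial g$ forces every limit point to satisfy \eqref{eq:optimality2}. Uniqueness (Lemma \ref{le:existence_sol}) together with compactness upgrades this to convergence of the whole sequence to $\xopt$.

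\textbf{Local linear convergence.} Once $\xb^k$ enters a neighborhood of the nonsingular point $\xopt$, I would set $\pb^k:=\xb^k-\xopt$, $\qb^k:=\sb^k-\xopt$ and convert the iteration into an error recursion. Monotonicity of $\partial g$ applied to the optimality conditions of $\sb^k$ and of $\xopt$, combined with the mean-value form $\nabla f(\xb^k)-\nabla f(\xopt)=\bar{G}_k\pb^k$ where $\bar{G}_k:=\int_0^1\nabla^2f(\xopt+s\pb^k)\,ds$, yields $\iprods{(L_k\mathbb{I}-\bar{G}_k)\pb^k,\qb^k}\ge L_k\norm{\qb^k}_2^2$, hence $\norm{\qb^k}_2\le L_k^{-1}\norm{L_k\mathbb{I}-\bar{G}_k}_2\,\norm{\pb^k}_2$. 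Since $\bar{G}_k\to\nabla^2f(\xopt)$ with error $O(\norm{\pb^k}_2)$ (again from \eqref{eq:self_concordant_type}), the spectral factor tends to a quantity governed by the extreme eigenvalues $\sigma_{\min}^{*},\sigma_{\max}^{*}$. Feeding in the step-size behavior---as $\xb^k\to\xopt$ one has $r_k\to0$, so \eqref{eq:step_size} gives $\alpha_k\to\beta_k^2/\lambda_k^2\in(0,1]$ and $\xb^{k+1}=\xb^k+\alpha_k\db^k$ lies on the segment $[\xb^k,\sb^k]$---I would assemble a one-step bound $\norm{\xb^{k+1}-\xopt}_2\le(\rho_{*}+o(1))\norm{\xb^k-\xopt}_2$, so that $\rho_{*}<\tfrac12<1$ delivers a genuine contraction and hence the linear rate.

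\textbf{Main obstacle.} The delicate part is the local step, and specifically reproducing the \emph{exact} form of $\rho_{*}$ in the statement (the ratios $L_k/\sigma_{\min}^{*}$ and $L_k/\sigma_{\max}^{*}$) together with the threshold $\tfrac12$. The generic proximal-gradient tools I sketched---firm nonexpansiveness of $\prox_{\Db_k^{-1}g}$ and the mean-Hessian identity---most naturally produce a factor built from $\sigma^{*}/L_k$ rather than $L_k/\sigma^{*}$, and they yield contraction as soon as that factor is below $1$, not $\tfrac12$. Closing this gap is where I expect the self-concordant-like structure to be indispensable: the analytic step-size \eqref{eq:step_size} has to be expanded to the order that converts the $\Db_k$-metric discrepancy into the stated spectral quantity, and the residual self-referential dependence (of $\bar{G}_k$ and of $\alpha_k$ on the current step) must be absorbed, which is presumably what replaces the factor $1$ by $\tfrac12$. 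Treating the nonsmooth $g$ purely through monotonicity and nonexpansiveness, and controlling the Hessian variation using only \eqref{eq:self_concordant_type} rather than a global Lipschitz gradient, are the two features that make this argument more involved than the classical Lipschitz-gradient one.
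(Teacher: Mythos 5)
Your global-convergence paragraph is fine (the paper's appendix actually omits that half entirely and only proves the local rate), so the substance to assess is the local argument, and there your proposal stops at precisely the decisive step. The good news is that the ``main obstacle'' you flag is not an obstacle. Your Euclidean monotonicity bound gives $\norm{\sb^k-\xopt}_2\le L_k^{-1}\norm{L_k\mathbb{I}-\bar{G}_k}_2\,\norm{\xb^k-\xopt}_2$ with $\bar{G}_k:=\int_0^1\nabla^2f(\xopt+s(\xb^k-\xopt))\,ds$, and by \eqref{eq:pro2} the factor tends to $\max\set{1-\sigma_{\min}^{*}/L_k,\ \sigma_{\max}^{*}/L_k-1}$ as $\xb^k\to\xopt$. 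This is $<1$ exactly when $L_k>\sigma_{\max}^{*}/2$, and the theorem's hypothesis already contains $1-L_k/\sigma_{\max}^{*}\le\rho_{*}<\tfrac12$, i.e.\ $L_k>\sigma_{\max}^{*}/2$. So you do not need to reproduce the exact form of $\rho_{*}$ or the threshold $\tfrac12$: your (weaker) sufficient condition is implied by the stated one, and combining with $\norm{\xb^{k+1}-\xopt}_2\le(1-\alpha_k)\norm{\xb^k-\xopt}_2+\alpha_k\norm{\sb^k-\xopt}_2$ and the fact that $\alpha_k$ stays bounded away from $0$ finishes the proof. As written, however, you declare this step unresolved, so the proposal is incomplete at the one point that matters; had you noticed the implication, your route would in fact yield the conclusion under a strictly weaker spectral condition than the paper's.

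For comparison, the paper's own proof is genuinely different and explains where its constants come from. It works entirely in the $\nabla^2f(\xopt)$-weighted norm: both $\sb^k$ and $\xopt$ are written as images under the scaled proximal map $(\nabla^2f(\xopt)+\partial g)^{-1}$, which is shown to be non-expansive from the dual to the primal $\xopt$-norm. The fixed-point residual then splits into a Newton-type term $S_{\xopt}(\xb^k)-S_{\xopt}(\xopt)$, bounded via \eqref{eq:pro2} by $\tfrac{e^{\tilde{r}_k}-\tilde{r}_k-1}{\tilde{r}_k}\norm{\xb^k-\xopt}_{\xopt}$ (which vanishes relative to $\norm{\xb^k-\xopt}_{\xopt}$), plus a metric-mismatch term $[\nabla^2f(\xopt)-L_k\mathbb{I}](\sb^k-\xb^k)$ whose dual norm is at most $\rho_{*}\norm{\sb^k-\xb^k}_{\xopt}$, where $\rho_{*}$ is the spectral norm of $\mathbb{I}-L_k\nabla^2f(\xopt)^{-1}$ --- hence the ratios $L_k/\sigma^{*}$ rather than $\sigma^{*}/L_k$. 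Because this term contains $\sb^k$ itself, it is split as $\rho_{*}(\norm{\sb^k-\xopt}_{\xopt}+\norm{\xb^k-\xopt}_{\xopt})$ and absorbed into the left-hand side, producing the coefficient $\rho_{*}/(1-\rho_{*})$; demanding that this be $<1$ is exactly $\rho_{*}<\tfrac12$. Your closing guess that the self-referential dependence is what turns the threshold $1$ into $\tfrac12$ is therefore correct, but no self-concordant-like machinery beyond \eqref{eq:pro2} is needed to realize it.
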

\vspace{-1ex}

\noindent\textbf{Linear convergence:}
According to Theorem \ref{th:convergence_of_prox_grad}, linear convergence is only possible when the condition number $\kappa$ of the Hessian at the true solution satisfies $\kappa=\sigma_{\max}^{*}/\sigma_{\min}^{*}<3$. 
While this seems too imposing, we claim that, for most $f$ and $g$ , this requirement is not too difficult to satisfy (see also the empirical evidence in Section  \ref{sec:app}). 
This is because the proof of Theorem \ref{th:convergence_of_prox_grad} only needs the smallest and the largest eigenvalues of  $\nabla^2f(\xopt)$, \emph{restricted} to the subspaces of the union of $\xopt - \xb^k$ for $k$ sufficiently large, to satisfy the conditions imposed by $\rho_{*}$. 
For instance,  when $g$ is based on the $\ell_1$-norm/the nuclear norm, the differences $\xopt - \xb^k$ have at most twice the sparsity/rank of $\xopt$ near convergence. 
Given such subspace restrictions, one can prove, via probabilistic assumptions on $f$ (cf., \cite{Bach2009}), that the restricted condition number is not only dramatically smaller than the full condition number $\kappa$ of the Hessian $\nabla^2f(\xopt)$, but also it can even be dimension independent with high probability.

\vspace{-2ex}
\subsection{Proximal-Newton algorithm}\label{subsec:proximal_newton}
\vspace{-1ex}
The case $\Hb_k \equiv \nabla^2f(\xb^k)$ deserves a special attention as the step-size selection rule becomes explicit and backtracking-free. 
The resulting method is a \textit{proximal-Newton} method and can be computationally attractive in certain big data problems due to its low iteration count.

The main step of the proximal-Newton algorithm is to compute the proximal-Newton search direction $\db^k$ as:
\begin{equation}\label{eq:subprob2}
\db^k := \sb^k - \xb^k,~\text{where}~\sb^k  := \mathrm{arg}\!\min_{\xb}\set{Q_{\nabla^2f(\xb^k)}(\xb,\xb^k) + g(\xb)}.
\end{equation}
Then, it updates the sequence $\set{\xb^k}$ by:
\begin{equation}\label{eq:prox_newton_scheme}
\xb^{k+1} := \xb^k + \alpha_k\db^k = (1-\alpha_k)\xb^k + \alpha_k\sb^k,
\end{equation}
where $\alpha_k \in (0, 1]$ is the step size.
If we set $\alpha_k = 1$ for all $k\geq 0$, then \eqref{eq:prox_newton_scheme} is called the full-step proximal-Newton method. Otherwise, it is a damped-step proximal-Newton method.

First, we show how to compute the step size $\alpha_k$ in the following lemma, which is a  direct consequence of Lemma \ref{le:grad_alg} by taking $\Hb_k \equiv \nabla^2f(\xb^k)$.

\vspace{-1ex}
\begin{lemma}\label{le:damped_PN_iterations}
Let $\set{\xb^k}_{k\geq 0}$ be a sequence generated by the proximal-Newton scheme \eqref{eq:prox_newton_scheme} starting from $\xb^0\in\dom{F}$.
Let $\lambda_k$ and $r_k$ be as defined by \eqref{eq:lbd_k_and_beta_k}.
If we choose the step-size $\alpha_k = r_k^{-1}\ln\left( 1 + r_k\right)$ then:
\begin{equation}\label{eq:damped_PN_descent}
F(\xb^{k+1}) \leq F(\xb^k) - r_k^{-1}\lambda_k^2\left[\left(1 + r_k^{-1}\right)\ln\left(1 + r_k\right) - 1\right].
\end{equation}
Moreover, this step-size $\alpha_k$ is optimal $($w.r.t. the worst-case performance$)$.
\end{lemma}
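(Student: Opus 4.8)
The plan is to derive this lemma as a direct specialization of Lemma \ref{le:grad_alg} to the choice $\Hb_k \equiv \nabla^2f(\xb^k)$, so that no new estimate exploiting $f\in\Fsc$ is needed beyond what has already been established. First I would compute the quantity $\beta_k$ from \eqref{eq:lbd_k_and_beta_k} under this choice. By definition $\beta_k = \iprods{\Hb_k\db^k,\db^k}^{1/2}$ while $\lambda_k = \norm{\db^k}_{\xb^k} = \iprods{\nabla^2f(\xb^k)\db^k,\db^k}^{1/2}$, so setting $\Hb_k = \nabla^2f(\xb^k)$ makes the variable-metric norm and the local norm coincide, giving the single identity $\beta_k = \lambda_k$. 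This identity is the entire crux of the specialization.

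Next I would verify that the hypothesis $\beta_k^2 r_k \leq (e^{r_k}-1)\lambda_k^2$ required by Lemma \ref{le:grad_alg} is satisfied automatically. Substituting $\beta_k = \lambda_k$ reduces this condition to $r_k \leq e^{r_k}-1$, which holds for every $r_k \geq 0$ by the elementary inequality $e^t \geq 1+t$. Consequently Lemma \ref{le:grad_alg} applies with $\alpha_k\in(0,1]$ and, crucially, the backtracking/scaling-down of the metric that the general diagonal case may invoke is never triggered here; this is precisely what makes the proximal-Newton step-size backtracking-free, as anticipated in the text.

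Then I would simply substitute $\beta_k = \lambda_k$ into the step-size formula \eqref{eq:step_size}, where $\frac{1}{r_k}\ln(1 + \beta_k^2 r_k/\lambda_k^2)$ collapses to $r_k^{-1}\ln(1+r_k)$, matching the claimed step-size, and into the descent bound \eqref{eq:grad_descent}, where $\beta_k^2/r_k$ becomes $\lambda_k^2/r_k$, the inner factor $\lambda_k^2/(r_k\beta_k^2)$ becomes $r_k^{-1}$, and $\beta_k^2 r_k/\lambda_k^2$ becomes $r_k$, yielding exactly \eqref{eq:damped_PN_descent}. The worst-case optimality of $\alpha_k$ then transfers verbatim, since the optimality assertion in Lemma \ref{le:grad_alg} is proved for an arbitrary $\Hb_k\in\Sc^n_{++}$ and thus holds in particular for $\Hb_k = \nabla^2f(\xb^k)$.

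The hard part, such as it is, will be purely the notational bookkeeping of the substitution; there is no genuine analytic difficulty, as all the estimation work resides in Lemma \ref{le:grad_alg}. The only point I would take care to state explicitly is the unconditional validity of $\beta_k^2 r_k \leq (e^{r_k}-1)\lambda_k^2$ in this setting, since that is exactly what separates the present backtracking-free lemma from the general diagonal-metric case.
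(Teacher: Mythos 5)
Your proposal is correct and follows exactly the paper's own route: the paper explicitly derives Lemma \ref{le:damped_PN_iterations} as ``a direct consequence of Lemma \ref{le:grad_alg} by taking $\Hb_k \equiv \nabla^2 f(\xb^k)$,'' which is precisely your observation that $\beta_k = \lambda_k$ makes the admissibility condition reduce to $r_k \leq e^{r_k} - 1$ and the step-size and descent formulas collapse to the claimed ones. Your explicit remark that the condition holds unconditionally (hence no backtracking) is a worthwhile detail the paper leaves implicit, but the argument is the same.
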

\vspace{-1ex}

Next, Theorem \ref{th:prox_newton_scheme} proves the local quadratic convergence of  the full-step proximal-Newton method, whose proof can be found in the appendix. 

\vspace{-1ex}
\begin{theorem}\label{th:prox_newton_scheme}
Suppose that the sequence $\set{\xb^k}_{k\geq 0}$ is generated by \eqref{eq:prox_newton_scheme} with full-step, i.e., $\alpha_k = 1$ for $k\geq 0$. If $r_k \leq \ln(4/3) \approx 0.28768207$ then it holds that:
\begin{equation}\label{eq:main_estimate}
\left({\lambda_{k+1}}/{\sqrt{\sigma^{k+1}_{\min}}}\right) \leq 2M_f\left({\lambda_k}/{\sqrt{\sigma^k_{\min}}}\right)^2,
\end{equation}
where $\sigma^k_{\min}$ is the smallest eigenvalue of $\nabla^2f(\xb^k)$.
Consequently, if we choose $\xb^0$ such that $\lambda_0 \leq \sigma_{\min}(\nabla^2f(\xb^0))\ln(4/3)$, then the sequence $\set{\lambda_k/\sqrt{\sigma^k_{\min}}}$ converges to zero at a quadratic rate.
\end{theorem}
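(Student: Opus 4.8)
The plan is to derive the one-step estimate \eqref{eq:main_estimate} by confronting the optimality conditions of two consecutive proximal-Newton subproblems through the monotonicity of $\partial g$, controlling the resulting Hessian-difference term with an operator-level consequence of the self-concordant-like structure, and then iterating the recursion.

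The first ingredient I would establish is the matrix form of Definition \ref{de:self_concordant_type}: for $\yb = \xb + \ub$,
\begin{equation*}
e^{-M_f\norm{\ub}_2}\nabla^2f(\xb) \preceq \nabla^2f(\yb) \preceq e^{M_f\norm{\ub}_2}\nabla^2f(\xb).
\end{equation*}
To get this I fix an auxiliary direction $\vb$, set $\psi(t) := \iprods{\nabla^2f(\xb+t\ub)\vb,\vb}$, and observe $\psi'(t) = D^3f(\xb+t\ub)[\ub,\vb,\vb]$. The scalar bound \eqref{eq:self_concordant_type} only controls $D^3f$ along a single repeated direction, so the technical heart is a polarization argument extending it to the mixed trilinear estimate $\abs{D^3f(\cdot)[\ub,\vb,\vb]} \leq M_f\norm{\ub}_2\,\iprods{\nabla^2f(\cdot)\vb,\vb}$. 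Granting this, $\abs{(\ln\psi)'(t)} \leq M_f\norm{\ub}_2$, and integrating from $0$ to $1$ yields the displayed two-sided bound. As byproducts I record $\sigma^{k+1}_{\min} \geq e^{-r_k}\sigma^k_{\min}$ and the dual-norm transfer $\norm{\wb}^{*}_{\xb^{k+1}} \leq e^{r_k/2}\norm{\wb}^{*}_{\xb^k}$, both with $\xb^{k+1}=\xb^k+\db^k$ and $r_k = M_f\norm{\db^k}_2$.

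Next I would write the subproblem optimality conditions \eqref{eq:subprob2}: there exist $\xi^k \in \partial g(\xb^{k+1})$ and $\xi^{k+1}\in\partial g(\sb^{k+1})$ with $\nabla^2f(\xb^{j})\db^{j} = -\nabla f(\xb^{j}) - \xi^{j}$ for $j \in \set{k,k+1}$. Subtracting these, and replacing $\nabla f(\xb^{k+1}) - \nabla f(\xb^k)$ by $\int_0^1 \nabla^2f(\xb^k+t\db^k)\db^k\,dt$ (legitimate since the full step gives $\xb^{k+1}-\xb^k = \db^k$), monotonicity $\iprods{\xi^{k+1}-\xi^k,\db^{k+1}}\geq 0$ collapses to
\begin{equation*}
\lambda_{k+1}^2 = \iprods{\nabla^2f(\xb^{k+1})\db^{k+1},\db^{k+1}} \leq \Big\langle\textstyle\int_0^1[\nabla^2f(\xb^k)-\nabla^2f(\xb^k+t\db^k)]\db^k\,dt,\ \db^{k+1}\Big\rangle.
\end{equation*}
Applying Cauchy--Schwarz in the local norm at $\xb^{k+1}$ and cancelling one factor of $\lambda_{k+1}$ leaves $\lambda_{k+1} \leq \norm{\int_0^1[\nabla^2f(\xb^k)-\nabla^2f(\xb^k+t\db^k)]\db^k\,dt}^{*}_{\xb^{k+1}}$. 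Now the operator bound takes over: transferring the dual norm to $\xb^k$ (factor $e^{r_k/2}$) and using $\norm{[\nabla^2f(\xb^k)-\nabla^2f(\xb^k+t\db^k)]\db^k}^{*}_{\xb^k} \leq (e^{r_kt}-1)\lambda_k$, integration gives $\lambda_{k+1} \leq e^{r_k/2}\lambda_k\big(\tfrac{e^{r_k}-1}{r_k}-1\big)$.

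Finally I combine this with $\sigma^{k+1}_{\min}\geq e^{-r_k}\sigma^k_{\min}$ and the elementary bound $r_k \leq M_f\lambda_k/\sqrt{\sigma^k_{\min}}$ (from $\lambda_k^2 \geq \sigma^k_{\min}\norm{\db^k}_2^2$), so that $\lambda_{k+1}/\sqrt{\sigma^{k+1}_{\min}} \leq e^{r_k}\big(\tfrac{e^{r_k}-1}{r_k}-1\big)\,\lambda_k/\sqrt{\sigma^k_{\min}}$; the restriction $r_k \leq \ln(4/3)$ makes the scalar multiplier no larger than $2M_f\lambda_k/\sqrt{\sigma^k_{\min}}$ (the one place the threshold $\ln(4/3)$ enters, through a routine monotone scalar inequality), which is exactly \eqref{eq:main_estimate}. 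For the ``Consequently'' claim I set $\theta_k := 2M_f\lambda_k/\sqrt{\sigma^k_{\min}}$, so that \eqref{eq:main_estimate} reads $\theta_{k+1} \leq \theta_k^2$; the stated initialization places $\theta_0 < 1$ and keeps $r_k \leq \ln(4/3)$ along the iterates by induction, whence $\theta_k \to 0$ quadratically. I expect the main obstacle to be the operator-form Hessian bound of the first paragraph, specifically the polarization passage from the single-direction third-derivative inequality to the mixed trilinear estimate; the monotonicity/Cauchy--Schwarz manipulation and the final scalar inequality are comparatively routine once that lemma is in hand.
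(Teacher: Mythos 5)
Your argument is correct and reaches the same recursion, but the one-step estimate is derived by a genuinely different decomposition. The paper writes both $\xb^{k+1}=\sb^k$ and $\sb^{k+1}$ as fixed points of the scaled proximal operator $\mathcal{P}^g_{\nabla^2 f(\xb^k)}$, invokes its non-expansiveness (Lemma \ref{le:non_expansive}), and splits the error into a term $S_{\xb^k}(\xb^{k+1})-S_{\xb^k}(\xb^k)$ plus a correction $e_{\xb^k}(\xb^{k+1},\sb^{k+1})$ that is itself proportional to $\tilde\lambda_{k+1}$; this forces an implicit inequality, the extra hypothesis $e^{r_k}<2$, and the factor $(2-e^{r_k})^{-1}$ in the contraction constant. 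You instead subtract the two optimality conditions directly, pair with $\db^{k+1}$, and use monotonicity of $\partial g$ (which is exactly the mechanism inside Lemma \ref{le:non_expansive}, but deployed at the metric $\nabla^2 f(\xb^{k+1})$ rather than $\nabla^2 f(\xb^k)$), so that $\lambda_{k+1}^2$ appears on the left and the only remaining term is the integral of the Hessian difference along the step. This buys you a cleaner, slightly sharper bound $\lambda_{k+1}\le e^{r_k/2}\frac{e^{r_k}-r_k-1}{r_k}\lambda_k$ with no self-referential term and no separate $e^{r_k}<2$ condition; the rest (dual-norm transfer by $e^{r_k/2}$, $\sigma^{k+1}_{\min}\ge e^{-r_k}\sigma^k_{\min}$, the bound $r_k\le M_f\lambda_k/\sqrt{\sigma^k_{\min}}$, and the scalar check at $r_k=\ln(4/3)$) coincides with the paper. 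The one ingredient you correctly flag as the technical heart, the mixed trilinear bound $\vert D^3f(\xb)[\ub,\vb,\vb]\vert\le M_f\norm{\ub}_2\norm{\vb}_{\xb}^2$, is the same ingredient the paper uses to prove \eqref{eq:pro2}; the paper simply cites it from Bach's equivalence theorem rather than proving the polarization step, so you are not assuming more than the paper does. Your treatment of the ``consequently'' clause via $\theta_k:=2M_f\lambda_k/\sqrt{\sigma^k_{\min}}$ and $\theta_{k+1}\le\theta_k^2$ matches the paper's (equally terse) conclusion.
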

\vspace{-1ex}

Theorem \ref{th:prox_newton_scheme} rigorously establishes where we can take full steps and still have  quadratic convergence. Based on this information, we propose the  proximal-Newton algorithm as in Algorithm \ref{alg:A3}. 

\vspace{-3ex}
\begin{algorithm}[!ht]\caption{(Prototype proximal-Newton algorithm)}\label{alg:A3}
\begin{algorithmic}
   \STATE {\bfseries Initialization:}  Given $\xb^0\in\dom{F}$ and $\sigma \in (0, \sigma_{\min}(\nabla^2f(\xb^0))\ln(4/3)]$.
   \FOR{$k = 0$ {\bfseries to} $k_{\max}$}
	\STATE 1.  Compute $\sb^k$ by\eqref{eq:subprob2}. Then, define $\db^k := \sb^k - \xb^k$ and $\lambda_k := \Vert\db^k\Vert_{\xb^k}$.
	\STATE 2.  If $\lambda_k \leq \varepsilon$, then terminate.
	\STATE 3.  If $\lambda_k > \sigma$, then compute $r_k := M_f\Vert\db^k\Vert_2$ and $\alpha_k := \frac{1}{r_k}\ln\left(1 + r_k\right)$; else $\alpha_k := 1$.
	\STATE 4.  Update $\xb^{k+1} := \xb^k + \alpha_k\db^k$.
   \ENDFOR
\end{algorithmic}
\end{algorithm}
\vspace{-3ex}
The most remarkable feature of Algorithm \ref{alg:A3} is that it does not require any globalization strategy such as backtracking line search for global convergence.

\vspace{0.5ex}
\noindent\textbf{Complexity analysis.}
First, we estimate the number of iterations needed when $\lambda_k \le \sigma$ to reach the solution $\xb^k$ such that $\frac{\lambda_k}{\sqrt{\sigma_k}} \leq \varepsilon$ for a given tolerance $\varepsilon > 0$. Based on the conclusion of Theorem \ref{th:prox_newton_scheme}, we can show that the number of iterations of Algorithm \ref{alg:A3} when $\lambda_k > \sigma$ does not exceed $k_{\max} := \left\lfloor \log_2\left(\frac{\ln(2M_f\varepsilon)}{\ln(2\sigma)}\right)\right\rfloor$.
Finally, we estimate the number of iterations needed when $\lambda_k > \sigma$. 
From Lemma \ref{le:damped_PN_iterations}, we see that for all $k\geq 0$ we have $\lambda_k \geq \sigma$ and $r_k \geq \sigma$. 
Therefore, the number of iterations is $\left\lfloor \frac{F(\xb^0) - F(\xopt)}{\psi(\sigma)} \right\rfloor$, where $\psi(\tau) := \tau\left( (1 + \tau^{-1})\ln(1 + \tau) - 1)\right) > 0$.

\vspace{-3ex}
\subsection{Proximal quasi-Newton algorithm}\label{subsec:proximal_quasi_newton}
\vspace{-1.5ex}
In many applications, estimating the Hessian $\nabla^2f(\xb^k)$ can be costly even though the Hessian is given in a closed form (cf., Section \ref{sec:app}). 
In such cases, variable metric strategies employing approximate Hessian can provide computation-accuracy tradeoffs. 
Among these approximations, applying quasi-Newton methods with BFGS updates for $\Hb_k$ would ensure its positive definiteness. 
Our analytic step-size procedures with backtracking automatically applies to the BFGS proximal-quasi Newton method, whose algorithm details and convergence analysis are omitted here.

\vspace{-2ex}
\section{Numerical experiments}\label{sec:app}
\vspace{-2ex}
We use  a variety of different real-data problems to illustrate the performance of our variable metric framework using a MATLAB implementation. 
We pick two advanced solvers for comparison: TFOCS \cite{Becker2011a} and PNOPT \cite{Lee2014}. 
TFOCS hosts accelerated first order methods. PNOPT provides a several proximal-(quasi) Newton implementations, which has been shown to be quite successful in logistic regression problems \cite{Lee2014}. Both use sophisticated backtracking linesearch enhancements. 
We benchmark all algorithms with performance profiles \cite{Dolan2002}.

 A performance profile is built based on a set $\mathcal{S}$ of $n_s$ algorithms (solvers) and a collection $\mathcal{P}$ of $n_p$ problems. We first build a profile based on computational time. We denote by
$T_{p,s} := \textit{computational time required to solve problem $p$ by solver $s$}$.
We compare the performance of algorithm $s$ on problem $p$ with the best performance of any algorithm on this problem; that is we compute the performance ratio
$r_{p,s} := \frac{T_{p,s}}{\min\{T_{p,\hat{s}} : \hat{s}\in \mathcal{S}\}}$.
Now, let $\tilde{\rho}_s(\tilde{\tau}) := \frac{1}{n_p}\mathrm{size}\left\{p\in\mathcal{P} : r_{p,s}\leq \tilde{\tau}\right\}$ for
$\tilde{\tau} \in\R_{+}$. The function $\tilde{\rho}_s:\R\to [0,1]$ is the probability for solver $s$ that a performance ratio is within a
factor $\tilde{\tau}$ of the best possible ratio. We use the term ``performance profile'' for the distribution function $\tilde{\rho}_s$ of a performance
metric.
In the following numerical examples, we plotted the performance profiles in $\log_2$-scale, i.e. $\rho_s(\tau) :=
\frac{1}{n_p}\mathrm{size}\left\{p\in\mathcal{P} : \log_2(r_{p,s})\leq \tau := \log_2\tilde{\tau}\right\}$.

\vspace{-2ex}
\subsection{Sparse logistic regression} 
\vspace{-1ex}
We consider the classical logistic regression problem of the form \cite{Yuan2011}:
\vspace{-1ex}
\begin{equation}\label{eq:classical_logistic}
\min_{\xb,\mu}\Big\{ N^{-1}\sum_{j=1}^N\log\left(1 + e^{-y_j(\iprods{\wb^{(j)}, \xb} + \mu)}\right) + \rho N^{-1/2}\norm{\xb}_1 \Big\},
\vspace{-1ex}
\end{equation} 
where $\xb\in \R^p$ is an unknown vector, $\mu$ is an unknown bias, and $y^{(j)}$ and $\wb^{j}$ are observations where $j=1,\cdots, N$. 
The logistic term in \eqref{eq:classical_logistic} is self-concordant-like with $M_f := \max \|\wb^{(j)}\|_2$ \cite{Bach2009}. 
In this case, the smooth term in \eqref{eq:classical_logistic} has Lipschitz gradient, hence several fast algorithms are applicable. 

Figure 1 illustrates the performance profiles for computational time (left) and the number of prox-operations (right) using   the $36$ medium size problems\footnote{Available at \href{http://www.csie.ntu.edu.tw/~cjlin/libsvmtools/datasets/}{http://www.csie.ntu.edu.tw/~cjlin/libsvmtools/datasets/}.}. For comparison, we use TFOCS-N07, which is Nesterov's 2007 two prox-method; and TFOCS-AT, which is Auslender and Teboulle's accelerated method, PNOPT with L-BFGS updates, and our algorithms: proximal gradient and proximal-Newton. From these performance profiles, we can observe that our proximal gradient is the best one in terms of computational time and the number of prox-operations.
In terms of time, proximal-gradient solves upto $83.3\%$ of problems with the best performance, while these numbers in TFOCS-N07 and PNOPT-LBFGS are $2.7\%$. Proximal Newton algorithm solves $11.1\%$ problems with the best performance. In prox-operations, proximal-gradient is also the best one in $75\%$ of problems.

\begin{figure}[!ht]
\vspace{-2ex}
\centerline{\includegraphics[width=0.95\textwidth]{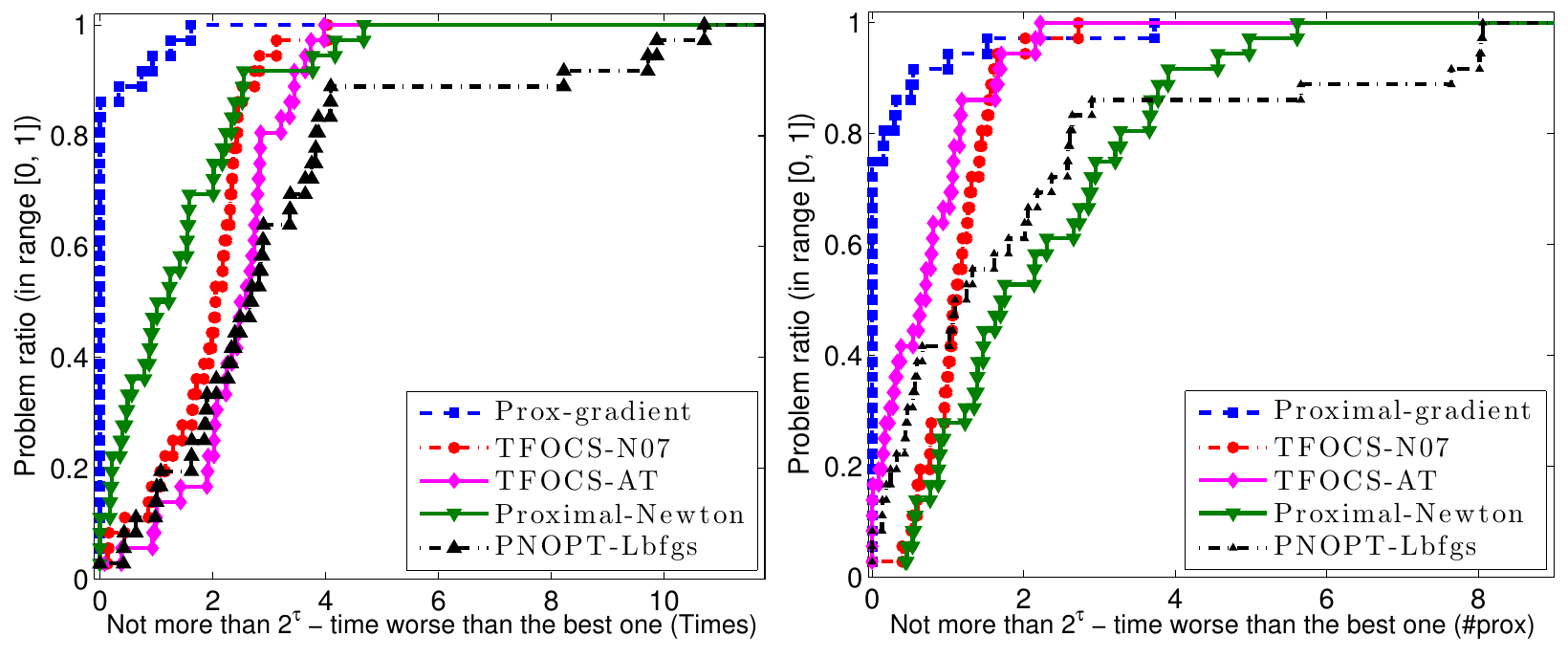}}
\vspace{-2ex}
\caption{Computational time (\textit{left})  and number of prox-operations (\textit{right})}\label{fig:logistic_profiles}
\vspace{-1ex}
\end{figure} 

\begin{figure}[!t]
\vspace{-1ex}
\centerline{\includegraphics[width=0.95\textwidth]{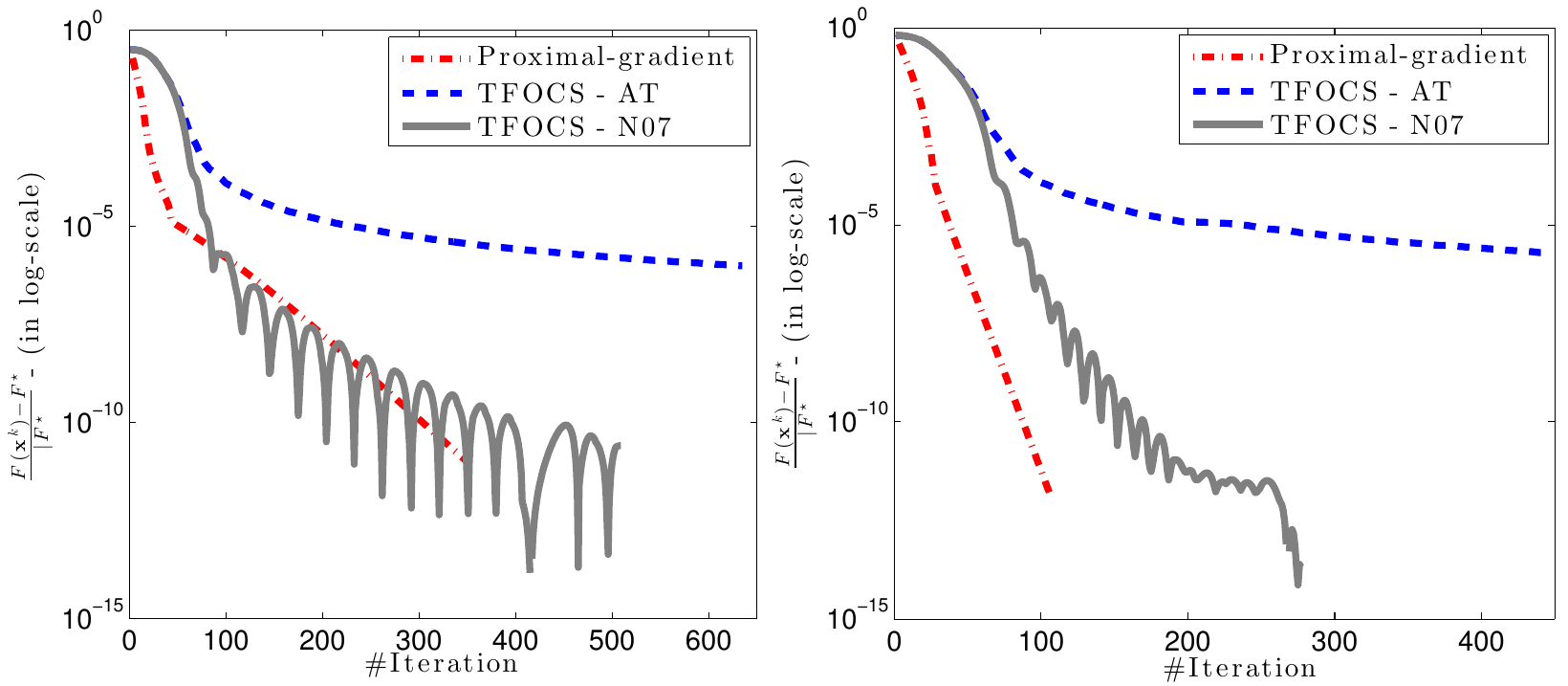}}
\vspace{-2ex}
\caption{\textit{Left}:  \texttt{rcv1\_train.binary}, and  \textit{Right:}  \texttt{real-sim}. }\label{fig:2exam_convergence}
\vspace{-3ex}
\end{figure} 

We now show an example convergence behavior of our proximal-gradient algorithm via two large-scale problems with $\rho=0.1$. The first problem is \texttt{rcv1\_train.binary} with the size $p=20242$ and $N = 47236$ and the second one is \texttt{real-sim} with the size $p=72309$ and $N = 20958$. For comparison, we use TFOCS-N07 and TFOCS-AT. For this example, PNOPT (with Newton, BFGS, and L-BFGS options) and our proximal-Newton do not scale and are omitted. 

Figure \ref{fig:2exam_convergence} shows that our simple gradient algorithm locally exhibits linear convergence whereas the fast method TFOCS-AT shows a sublinear convergence rate. The variant TFOCS-N07 is the Nesterov's dual proximal algorithm, which exhibits oscillations but performs comparable to our proximal gradient method in terms of accuracy, time, and the total number of prox operations. The computational time and the number of prox-operations in these both problems are given as follows: Proximal-gradient: (15.67s, 698), (13.71s, 152); TFOCS-AT: (20.57s, 678), (33.82s, 466); TFOCS-N07: (17.09s, 1049), (22.08s, 568), respectively. For these data sets, the relative performance of the algorithms is surprisingly consistent across various regularization parameters. 

\vspace{-3ex}
\subsection{Restricted condition number in practice} 
\vspace{-2ex}
The convergence plots in Figure  \ref{fig:2exam_convergence} indicate that the linear convergence condition in  Theorem \ref{th:convergence_of_prox_grad} may be satisfied. \emph{In fact, in all of our tests, the proximal gradient algorithm exhibits  locally linear convergence.} Hence, to see if Remark 1 is grounded in practice, we perform the following test on the \texttt{a\#a} dataset$^1$, consisting of small to medium problems. We first solve each problem with the proximal-Newton method up to 16 digits of accuracy to obtain $\xopt$, and we calculate $\nabla^2f(\xopt)$. We then run our proximal gradient algorithm until convergence, and during its linear convergence, we record $\Vert\nabla^2f(\xopt)(\xopt - \xb^k)\Vert^2/\Vert\xopt - \xb^k\Vert_2^2$, and take the ratios of the maximum and the minimum  to estimate the restricted condition number for each problem.
\begin{figure}[!ht]
\vspace{-2ex}
\centering
\centerline{\includegraphics[width=0.95\textwidth]{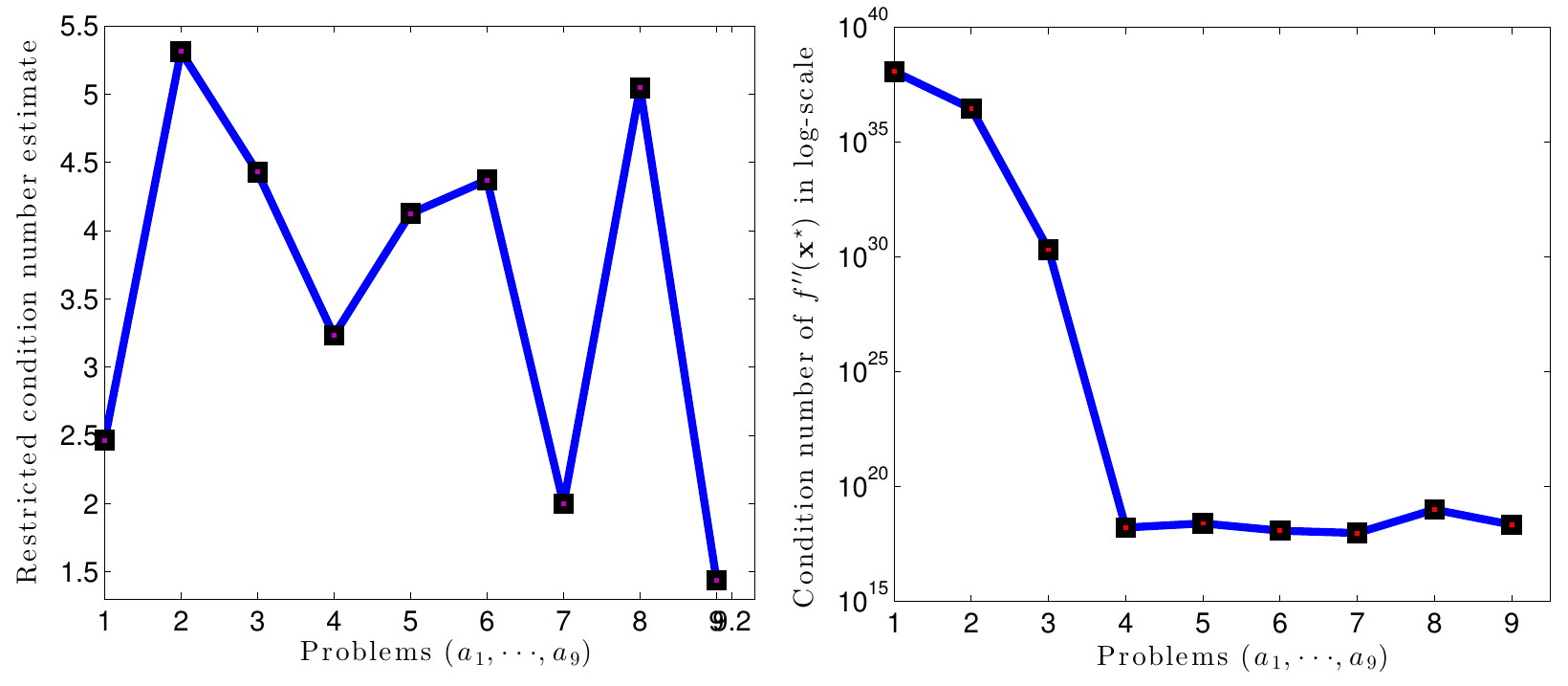}}
\vspace{-2ex}
\caption{Restricted condition number (\textit{left}), and condition number (\textit{right}) estimates}\label{fig:cond_number}
\vspace{-3.5ex}
\end{figure}

Figure \ref{fig:cond_number} illustrates that while the condition number of the Hessian $\nabla^2f(\xopt)$ can be extremely large, as the algorithm navigates to the optimal solution $\xopt$ through sparse subspaces, the restricted condition number estimates are in fact very close to $3$. 
Given that algorithm still exhibit linear convergence for the cases $\#=2,3,4,5,6,8$ (where our condition cannot be met), we believe that the tightness of our convergence condition is an artifact of our proof and may be improved. 

\vspace{-2ex}
\subsection{Sparse multinomial logistic regression}
\vspace{-1ex}
For sparse multimonomial logistic regression, the underlying problem is formulated in the form of \eqref{eq:composite_min_Fx}, which the objective function $f$ is given as:
\begin{equation}\label{eq:mn_logistic_fx}
\vspace{-1ex}
f(\mathbf{X}) := N^{-1}\sum_{j=1}^N\Big[\log\Big(1 + \sum_{i=1}^{m}e^{\iprods{\wb^{(j)}, \Xb^{(i)}}}\Big) - \sum_{i=1}^{m}\yb_i^{(j)}\iprods{\wb^{(j)}, \Xb^{(i)}}\Big].
\vspace{-0.5ex}
\end{equation}
where $\mathbf{X}$ can be considered as a matrix variable of size $m\times p$ formed from $\Xb^{(1)}, \cdots, \Xb^{(m)}$. Other vectors, $\yb^{(j)}$ and $\wb^{(j)}$ are given as input data for $j=1,\dots, N$. 
The function $f$ has closed form gradient as well as Hessian. 
However, forming a full hessian matrix $\nabla^2{f}(\xb)$ is especially costly in large scale problems when $N \gg 1$. 
In this case,  proximal-quasi-Newton methods are more suitable. 
First, we show in Lemma \ref{le:self_concordant_of_L} that $f$ satisfies Definition \ref{de:self_concordant_type}, whose proof is in the appendix. 
\vspace{-1ex}
\begin{lemma}\label{le:self_concordant_of_L}
The function $f$ defined by \eqref{eq:mn_logistic_fx} is convex and self-concordant-like in the sense of Definition \ref{de:self_concordant_type} with the parameter $M_{f} := \sqrt{6}N^{-1}\displaystyle\max_{j=1,\dots, N}\Vert\wb^{(j)}\Vert_2$.
\end{lemma}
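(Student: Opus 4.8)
The plan is to reduce the statement to a one–dimensional estimate for the scalar log-sum-exp and to recognize the resulting derivatives as cumulants of an induced categorical law. Fix $\mathbf{X}\in\dom{f}$ and a direction $\mathbf{U}$, the $m\times p$ reshaping of $\mathbf{u}\in\mathbb{R}^n$, so that $\norm{\mathbf{u}}_2=\norm{\mathbf{U}}_F$, and put $\varphi(t):=f(\mathbf{X}+t\mathbf{U})$. The terms $-\sum_{i}\mathbf{y}_i^{(j)}\iprods{\wb^{(j)},\Xb^{(i)}}$ are affine in $\mathbf{X}$, hence contribute nothing to $\varphi''$ or $\varphi'''$, so it suffices to control, for each $j$, the function $\varphi_j(t):=\log\!\big(1+\sum_{i=1}^m e^{a_i+tb_i}\big)$ with $a_i:=\iprods{\wb^{(j)},\Xb^{(i)}}$ and $b_i:=\iprods{\wb^{(j)},\mathbf{U}^{(i)}}$, and then to average with weight $N^{-1}$.

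First I would introduce the weights $p_i(t):=e^{a_i+tb_i}\big/\big(1+\sum_{k}e^{a_k+tb_k}\big)$ for $i=1,\dots,m$, together with $p_0:=1-\sum_{i\ge1}p_i$ and the convention $b_0:=0$, so that $p=(p_0,\dots,p_m)$ is a probability vector. Differentiating $\varphi_j$ and simplifying identifies $\varphi_j'(t)=\bar b$, $\varphi_j''(t)=\mathbb{E}_p[(b-\bar b)^2]$ and $\varphi_j'''(t)=\mathbb{E}_p[(b-\bar b)^3]$, where $\bar b:=\mathbb{E}_p[b]$; these are the first three cumulants of $p$ evaluated on the values $\{b_i\}$. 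Convexity of $f$ follows at once from $\varphi_j''=\mathrm{Var}_p(b)\ge0$ together with the affine part, so all the content is in the third-order bound.

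The crux is a moment inequality. With $Y:=b-\bar b$, $\abs{\varphi_j'''(t)}=\abs{\mathbb{E}_p[Y^3]}\le\mathbb{E}_p[\abs{Y}^3]\le\big(\max_i\abs{Y_i}\big)\,\mathbb{E}_p[Y^2]$; since $\abs{\bar b}\le\max_i\abs{b_i}$, we get $\max_i\abs{Y_i}\le 2\max_i\abs{b_i}$. Cauchy–Schwarz bounds the values, $\abs{b_i}\le\norm{\wb^{(j)}}_2\norm{\mathbf{U}^{(i)}}_2\le\norm{\wb^{(j)}}_2\norm{\mathbf{U}}_F$, whence $\abs{\varphi_j'''(t)}\le 2\norm{\wb^{(j)}}_2\norm{\mathbf{U}}_F\,\varphi_j''(t)$. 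Averaging over $j$ and pulling out $\max_j\norm{\wb^{(j)}}_2$ gives $\abs{\varphi'''(t)}\le 2N^{-1}\big(\max_j\norm{\wb^{(j)}}_2\big)\norm{\mathbf{U}}_F\,\varphi''(t)$; since $2\le\sqrt6$ this is \eqref{eq:self_concordant_type} with $M_f=\sqrt6\,N^{-1}\max_j\norm{\wb^{(j)}}_2$, as claimed.

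The main obstacle is isolating the right form of the third-order estimate: bounding the third central moment $\mathbb{E}_p[(b-\bar b)^3]$ by the variance $\mathbb{E}_p[(b-\bar b)^2]$ times a norm of the direction, uniformly over all admissible $p$ and $\{b_i\}$ and with an explicit absolute constant. The cumulant identification of $\varphi_j',\varphi_j'',\varphi_j'''$ and the Cauchy–Schwarz passage from $\abs{b_i}$ to $\norm{\wb^{(j)}}_2\norm{\mathbf{U}}_F$ are routine bookkeeping; the delicate point is the centering step that converts the raw third moment into a multiple of the variance, since a naive uncentered expansion only bounds things by $\mathbb{E}_p[b^2]$, which can be far larger than $\mathrm{Var}_p(b)$ and would not yield a self-concordant-like constant.
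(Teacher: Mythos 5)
Your proof is correct in substance but takes a genuinely different route from the paper's. The paper works with the explicit rational expressions for $\psi''$ and $\psi'''$ of the scalar log-sum-exp, writing $\psi''(t)$ as $\sum_{i<j}(\mathbf{a}_i-\mathbf{a}_j)^2b_ib_j/(\sum_ib_i)^2$ and $\psi'''(t)$ as a similar triple sum, and then invokes the pointwise bound $\abs{\mathbf{a}_i+\mathbf{a}_j-2\mathbf{a}_k}\leq\sqrt{6}\,\norm{\mathbf{a}}_2$ -- this is where the $\sqrt{6}$ comes from. You instead identify $\varphi_j''$ and $\varphi_j'''$ as the variance and third central moment of $b$ under the softmax distribution (augmented with $p_0$, $b_0=0$ to absorb the ``$1+$''), and use the elementary moment inequality $\abs{\mathbb{E}[Y^3]}\leq(\max_i\abs{Y_i})\,\mathbb{E}[Y^2]$ together with $\max_i\abs{b_i-\bar b}\leq 2\max_i\abs{b_i}$. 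This is cleaner, avoids the triple-sum bookkeeping entirely, and in fact yields the sharper per-term constant $2\norm{\wb^{(j)}}_2\norm{\ub}_2$ in place of $\sqrt{6}\norm{\wb^{(j)}}_2\norm{\ub}_2$ (you correctly note $2\leq\sqrt 6$ so the stated $M_f$ is still valid). Your cumulant derivation, the centering step, and the Cauchy--Schwarz passage to $\norm{\wb^{(j)}}_2\norm{\mathbf{U}}_F$ are all correct.

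One caveat on the very last line: the factor $N^{-1}$ cannot be ``pulled out'' into the constant the way you write it. Since $\varphi''=N^{-1}\sum_j\varphi_j''$ as well, the averaging gives $\abs{\varphi'''(t)}\leq 2\bigl(\max_j\norm{\wb^{(j)}}_2\bigr)\norm{\mathbf{U}}_F\,\varphi''(t)$, i.e.\ the constant is $2\max_j\norm{\wb^{(j)}}_2$ \emph{without} the $N^{-1}$: scaling a function by $\alpha>0$ multiplies both $\varphi'''$ and $\varphi''$ by $\alpha$ and hence leaves the self-concordant-like constant unchanged (this is exactly property (b) in the appendix). The displayed inequality with $2N^{-1}\max_j\norm{\wb^{(j)}}_2$ is therefore stronger than what your argument proves, by a factor of $N$. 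To be fair, the paper's own proof commits the identical slip in its final sentence, so the $N^{-1}$ in the lemma statement is not justified by either argument; what both proofs actually establish is the self-concordant-like property with $M_f$ proportional to $\max_j\norm{\wb^{(j)}}_2$.
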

\vspace{-1ex}
The performance profiles of $20$ small-to-medium size problems$^1$ are shown in Figure \ref{fig:logistic_profiles} in terms of computational time (left) as well as number of prox-operations (right), respectively. Both proximal-gradient method and proximal-Newton method with BFGS have good performance. They can solve unto $55\%$ and $45\%$ problems with the best time performance, respectively. These methods are also the best in terms of prox-operations ($70\%$ and $30\%$).
\begin{figure}[!t]
\vspace{-1ex}
\centerline{\includegraphics[width=0.95\textwidth]{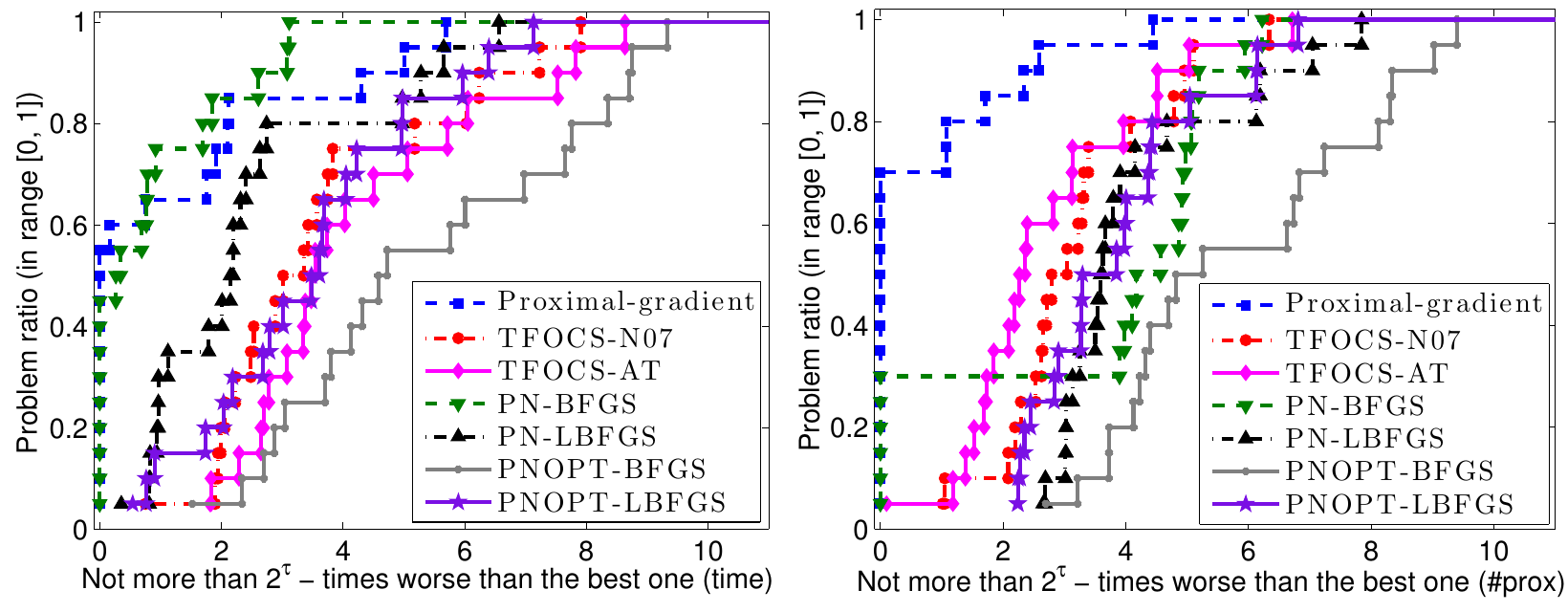}}
\vspace{-2ex}
\caption{Computational time (\textit{left}), and number of prox-operations (\textit{right})}\label{fig:logistic_profiles}
\vspace{-3ex}
\end{figure} 

\vspace{-2ex}
\subsection{A sytlized example of a non-Lipschitz gradient function for \eqref{eq:composite_min_Fx}}
\vspace{-1ex}
We consider the following convex composite minimization problem by modifying one of the canonical examples of geometric programming \cite{Boyd2004}:
\begin{equation}\label{eq:geo_prog_prob}
\vspace{-1ex}
\min_{\xb\in\Omega}\Big\{ f(\xb) := \sum_{i=1}^me^{\mathbf{a}_i^T\xb + b_i} + \mathbf{c}^T\xb\Big\} +  g(\xb),
\vspace{-0.5ex}
\end{equation}  
where $\Omega$ is a simple convex set, $\mathbf{a}_i, \mathbf{c} \in \R^n$ and $b_i\in \R$ are random, and $g$ is the $\ell_1$-norm. 
After some algebra, we can show that $f$ satisfies Definition \ref{de:self_concordant_type} with $M_f := \max\set{\norm{\mathbf{a}_i}_2 : 1\leq i\leq m}$. 
Unfortunately, $f$ does not have Lipschitz continuous gradient in $\R^n$.
 
We implement our proximal-gradient algorithm and compare it with TFOCS and PNOPT-LBFGS. However, TFOCS breaks down in running this example due to the estimation of Lipschitz constant, while PNOPT is rather slow. Several tests on synthetic data show that our algorithm outperforms PNOPT-LBFGS. As an example, we show the convergence behavior of both these methods in Figure \ref{eq:fval_vs_prox} where we plot the accuracy of the objective values w.r.t. the number of prox-operators for two cases of $\varepsilon = 10^{-6}$ and $\varepsilon = 10^{-12}$, respectively.
\begin{figure}[!ht]
\vspace{-3ex}
\centerline{\includegraphics[width=0.95\textwidth]{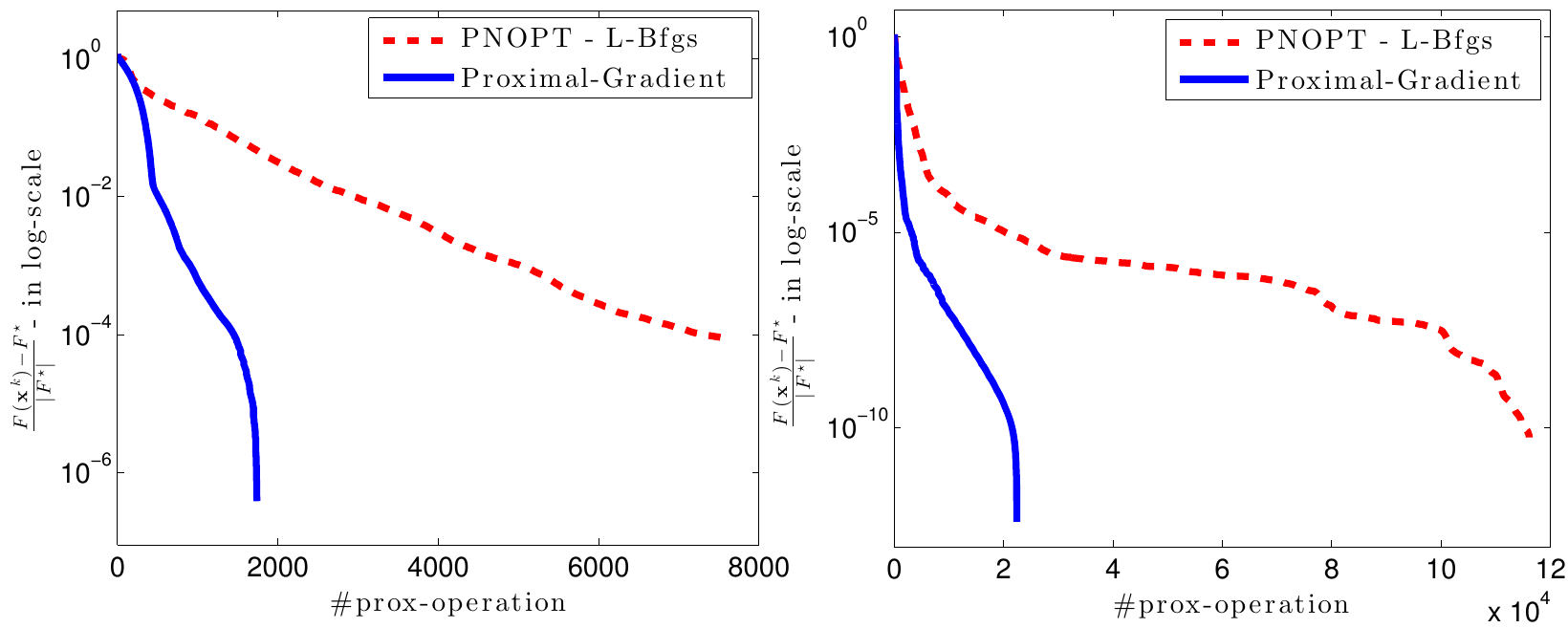}}
\vspace{-2ex}
\caption{Relative objective values w.r.t. \#prox: \textit{left}: $\varepsilon=10^{-6}$, and \textit{right}: $\varepsilon = 10^{-12}$.} \label{eq:fval_vs_prox}
\vspace{-3ex}
\end{figure} 
As we can see from this figure that our prox-gradient method requires many fewer prox-operations to achieve a very high accuracy compared to PNOPT. Moreover, our method is also $20$ to $40$ times faster than PNOPT in this numerical test.
\vspace{-2ex}
\section{Conclusions}\label{sec: conclusions}
\vspace{-2.5ex}
Convex optimization efficiency relies significantly on the structure of the objective functions. 
In this paper, we propose a variable metric method  for minimizing the sum of a self-concordant-like convex function and a proximally tractable convex function. 
Our framework is applicable in several interesting machine learning problems and do not rely on the usual Lipschitz gradient assumption on the smooth part for its convergence theory. A highlight of this work is the new analytic step-size selection procedure that enhances backtracking procedures.  Thanks to this new approach, we can prove that the basic gradient variant of our framework has improved local convergence guarantees under certain conditions while the tuning-free proximal Newton method has locally quadratic convergence. While our assumption on the restricted condition number in Theorem 1 is not deterministically verifiable \emph{a priori}, we provide empirical evidence that it can hold in many practical problems. Numerical experiments on different applications that have both self-concordant-like and Lipschitz gradient properties demonstrate that the  gradient algorithm based on the former assumption can be more efficient than the fast algorithms based on the latter assumption. As a result, we plan to look into fast versions of our gradient scheme as future work.

\bibliographystyle{plain}

\newpage
\appendix
\section{Appendix: Composite convex minimization involving self-concordant-like cost functions}
We derive some fundamental properties of self-concordant-like functions, introduce the notion of scaled proximal operators, and provide the full-proofs of the technical results in the main text.

\vspace{-2ex}
\subsection{Properties of self-concordant-like functions}
\vspace{-1ex}
We define $D^2 f(\xb) [\ub, \ub] := \left\Vert \ub \right\Vert_{\xb}^2$ and $D^3f(\xb)[\ub, \ub, \ub] := \iprods{ D^3f(\xb)[\ub]\ub, \ub}$, following the notations in \cite{Nesterov2004,Nesterov1994}.
An equivalent definition of self-concordant-like functions is provided by the following theorem \cite{Bach2009}.

\begin{theorem} \label{th:equiv_def_sclfunc}
A convex function $f \in \mathcal{C}^3: \R^n \to \R$ is $M_f$-self-concordant-like if and only if for any $\xb, \ub_1, \ub_2, \ub_3 \in \R^n$, we have:
\begin{equation}
\left\vert D^3 f(\xb) [\ub_1, \ub_2, \ub_3] \right\vert \leq M_f \left\Vert\ub_1 \right\Vert_2 \left\Vert\ub_2 \right\Vert_{\xb} \left\Vert\ub_3 \right\Vert_{\xb}. \notag
\end{equation}
\end{theorem}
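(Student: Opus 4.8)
The plan is to prove the two implications separately, treating the reverse direction as essentially immediate and concentrating the effort on the forward one.

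For the ``if'' direction I would simply specialize the trilinear inequality to $\ub_1 = \ub_2 = \ub_3 = \ub$. Writing $\varphi(t) := f(\xb + t\ub)$ and recalling that $\varphi''(0) = D^2f(\xb)[\ub,\ub] = \norm{\ub}_{\xb}^2$ while $\varphi'''(0) = D^3f(\xb)[\ub,\ub,\ub]$, the specialization reads $\abs{\varphi'''(0)} \leq M_f\norm{\ub}_2\varphi''(0)$. Since $\xb\in\dom{f}$ and $\ub\in\R^n$ are arbitrary, this is exactly \eqref{eq:self_concordant_type} evaluated at $t=0$ at every base point, hence equivalent to Definition \ref{de:self_concordant_type}. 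The same observation, read backwards, is the starting point of the other direction: Definition \ref{de:self_concordant_type} taken at $t=0$ over all $\xb$ is precisely the \emph{diagonal} bound $\abs{D^3f(\xb)[\ub,\ub,\ub]} \leq M_f\norm{\ub}_2\norm{\ub}_{\xb}^2$. The task is therefore a symmetrization: upgrade this diagonal estimate on the symmetric trilinear form $A := D^3f(\xb)$ to the general one, at each fixed $\xb$ (where I may assume $\nabla^2f(\xb)\succ 0$, since the local norm requires it, or else restrict to the range of the Hessian).

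I would carry this out in two stages. First, fix the ``$\ell_2$-slot'' $\ub_1$ and view $(\ub_2,\ub_3)\mapsto A[\ub_1,\ub_2,\ub_3]$ as a symmetric bilinear form on the Hilbert space $(\R^n, \iprods{\nabla^2f(\xb)\cdot,\cdot})$. Because a self-adjoint operator has operator norm equal to its maximal absolute Rayleigh quotient, one gets $\sup_{\norm{\ub_2}_{\xb},\norm{\ub_3}_{\xb}\le 1}\abs{A[\ub_1,\ub_2,\ub_3]} = \sup_{\norm{\vb}_{\xb}\le 1}\abs{A[\ub_1,\vb,\vb]}$, which reduces the full claim to the \emph{semi-diagonal} inequality $\abs{A[\ub_1,\vb,\vb]} \leq M_f\norm{\ub_1}_2\norm{\vb}_{\xb}^2$.

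The crux is the second stage: deriving this semi-diagonal inequality from the fully diagonal one. I would first normalize via $\ub = \nabla^2f(\xb)^{-1/2}\wb$, turning $\norm{\cdot}_{\xb}$ into the Euclidean norm so that only the distinguished slot carries a different (still inner-product) norm; the whole difficulty is then confined to that single slot. To close it I would use a one-dimensional restriction: for fixed $\vb$ the map $\ub_1\mapsto A[\ub_1,\vb,\vb]$ is linear and equals one third of the directional derivative at $\vb$ of the diagonal cubic $\Phi(\ub) := A[\ub,\ub,\ub]$, and I would control that derivative by optimizing the two-sided estimate $\abs{\Phi(\vb+\tau\ub_1)}\le M_f\norm{\vb+\tau\ub_1}_2\norm{\vb+\tau\ub_1}_{\xb}^2$ over the scalar $\tau$, after splitting off the constant term $\Phi(\vb)$ that is itself governed by the diagonal bound. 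I expect this last optimization, rather than the bilinear reduction, to be the main obstacle: it is exactly where the mismatch between the $\ell_2$ norm on one slot and the local norm on the other two must be reconciled while keeping the sharp constant $M_f$. Note that the clean Hilbert-space fact ``for symmetric multilinear forms the injective norm is attained on the diagonal'' applies only when all three slots share the same norm, which is why the genuine work sits in this mismatch rather than in a direct appeal to that fact.
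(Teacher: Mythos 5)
The paper itself does not prove this theorem --- it is imported verbatim from \cite{Bach2009} --- so there is no in-paper proof to compare against, and I will assess your argument on its own terms. Your ``if'' direction is correct, and so is your first reduction: for fixed $\ub_1$ the form $(\ub_2,\ub_3)\mapsto D^3f(\xb)[\ub_1,\ub_2,\ub_3]$ is symmetric bilinear, and on the Hilbert space $(\R^n,\iprods{\nabla^2f(\xb)\cdot,\cdot})$ its norm is attained on the diagonal, so the full claim does reduce to the semi-diagonal bound $\abs{A[\ub_1,\vb,\vb]}\le M_f\norm{\ub_1}_2\norm{\vb}_{\xb}^2$ with $A:=D^3f(\xb)$.

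The gap is in your second stage, and it is not merely an unfinished optimization: the step you yourself flag as ``the main obstacle'' is provably impossible in the form you set it up. You propose to deduce the semi-diagonal bound from the diagonal bound $\abs{A[\ub,\ub,\ub]}\le M_f\norm{\ub}_2\norm{\ub}_{\xb}^2$ by purely algebraic manipulations (polarization of the cubic $\Phi$ and optimization over $\tau$) of the single trilinear form $A$ at the fixed point $\xb$. No such implication holds for symmetric trilinear forms with these mismatched norms, with \emph{any} uniform constant. On $\R^2$, take $\Hb=\mathrm{diag}(\delta,1)$ with $0<\delta<1$, so $\norm{\ub}_{\Hb}^2=\delta u_1^2+u_2^2$, and let $A[\ub,\vb,\wb]:=\tfrac{1}{3}\left(u_1v_2w_2+v_1u_2w_2+w_1u_2v_2\right)$, which is symmetric. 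Then $\abs{A[\ub,\ub,\ub]}=\abs{u_1}\,u_2^2\le\norm{\ub}_2\norm{\ub}_{\Hb}^2$, so the diagonal bound holds with constant $1$; yet for $\ub_1=(0,1)$ and $\vb=(1,\sqrt{\delta})$ one computes $A[\ub_1,\vb,\vb]=\tfrac{2}{3}\sqrt{\delta}$ while $\norm{\ub_1}_2\norm{\vb}_{\Hb}^2=2\delta$, a ratio of $\tfrac{1}{3\sqrt{\delta}}$, which is unbounded as $\delta\to 0$. (Convexity adds nothing here beyond $\Hb\succeq 0$.) The information your plan discards is exactly what is needed: Definition \ref{de:self_concordant_type} asserts the diagonal inequality at \emph{every} point of every segment, i.e., it couples $D^3f$ to the variation of the field $\xb\mapsto\nabla^2f(\xb)$ that defines the local norms, and any correct proof of the forward implication (such as the one in \cite{Bach2009}) must exploit this non-pointwise structure rather than symmetrize $D^3f(\xb)$ at a single $\xb$.
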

Let $f: \R^n \to \R$ be an $M_f$-self-concordant-like function. Then:
\begin{itemize}
\item[$\mathrm{a)}$] The function $f_q(\xb) := \alpha + \iprods{\mathbf{a}, \xb} + (1/2)\iprods{\mathbf{A}\xb,\xb} + f(\xb)$ is $M_f$-self-concordant-like, for any $\alpha \in \R$, $\mathbf{a} \in \R^n$, and positive symmetric matrix $\mathbf{A}: \R^n \to \R^n$.
\item[$\mathrm{b)}$] The function $\alpha f$ is $M_f$-self-concordant-like for any $\alpha \geq 0$. 
\item[$\mathrm{c)}$] Let $g$ be an $M_g$-self-concordant-like funciton. Then the function $( f + g )$ is $M$-self-concordant-like, where $M := \max \left\{ M_f, M_g \right\}$.
\item[$\mathrm{d)}$] The function $f(\mathbf{A} \xb + \mathbf{b})$ is $( M_f \norm{\mathbf{A}}_2 )$-self-concordant-like, for any matrix $\mathbf{A} : \R^m \to \R^n$, $\xb \in \R^m$, and $\mathbf{b} \in \R^n$.
\end{itemize}
We will repeatedly use the following inequalities in the rest of this appendix.

\begin{theorem}\label{th:properties1} 
Let $f: \R^n \to \R$ be an $M_f$-self-concordant-like function, and define $\lambda_{\xb}(\yb) := \left\Vert \yb - \xb \right\Vert_{\xb}$ and $r_{\xb}(\yb) := M_f \left\Vert\yb - \xb \right\Vert_2$ for all $\xb, \yb \in \dom{f}$. 
We have the following inequalities for any $\xb, \yb \in \dom{f}$:
\begin{itemize}
\item[$\mathrm{a)}$] Bounds on the local norm:
\begin{equation}\label{eq:pro1}
e^{ \frac{-r_{\xb}(\yb)}{2} } \lambda_{\xb}(\yb) \leq \lambda_{\yb}(\xb) \leq e^{ \frac{r_{\xb}(\yb)}{2}} \lambda_{\xb}(\yb).
\end{equation}
\item[$\mathrm{b)}$] Bounds on the Hessian matrix: 
\begin{equation}\label{eq:pro2}
e^{ -r_{\xb}(\yb) } \nabla^2 f(\xb) \preceq \nabla^2 f(\yb) \preceq e^{ r_{\xb}(\yb) } \nabla^2 f(\xb). 
\end{equation}
\item[$\mathrm{c)}$] Bounds on the gradient vector:
\begin{equation}\label{eq:pro3}
\gamma_{*}\left( r_{\xb}(\yb) \right) \lambda_{\xb}(\yb)^2 \leq \iprods{\nabla f(\yb) - \nabla f(\xb), \mathbf{y} - \mathbf{x}} \leq \gamma \left( r_{\xb}(\yb) \right) \lambda_{\xb}(\yb)^2,
\end{equation}
where $\gamma_*( \tau ) := - \left( \frac{e^{ - \tau } - 1 }{ \tau } \right)$, and $\gamma ( \tau ) := \frac{ e^\tau - 1 }{\tau}$.
\item[$\mathrm{d)}$] Bounds on the function value:
\begin{equation}\label{eq:pro4}
\omega_{*}(r_{\xb}(\mathbf{y}))\lambda_{\xb}(\mathbf{y})^2  \leq  f(\mathbf{y}) - f(\mathbf{x}) - \nabla{f}(\mathbf{x})^T(\mathbf{x})  \leq \omega(r_{\xb}(\mathbf{y}))\lambda_{\xb}(\mathbf{y})^2,
\end{equation}
where $\omega_{*}(\tau) := \frac{e^{-\tau} + \tau - 1}{\tau^2}$ and $\omega(\tau) := \frac{e^{\tau} - \tau - 1}{\tau^2}$ are both strictly convex and increasing.
\end{itemize}
\end{theorem}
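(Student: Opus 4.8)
The plan is to establish the four bounds in the order (b)$\to$(a)$\to$(c)$\to$(d), since each feeds into the next, with the equivalent characterization in Theorem~\ref{th:equiv_def_sclfunc} as the engine. First I would prove the Hessian bound \eqref{eq:pro2}. Fix $\ub\in\R^n$, set $\xb_t := \xb + t(\yb-\xb)$ for $t\in[0,1]$, and define $\psi(t) := \iprods{\nabla^2 f(\xb_t)\ub, \ub} = \norm{\ub}_{\xb_t}^2$. Differentiating gives $\psi'(t) = D^3 f(\xb_t)[\yb-\xb, \ub, \ub]$, and Theorem~\ref{th:equiv_def_sclfunc} yields $\vert\psi'(t)\vert \leq M_f\norm{\yb-\xb}_2\norm{\ub}_{\xb_t}^2 = r_{\xb}(\yb)\,\psi(t)$. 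Hence $\vert(\ln\psi)'(t)\vert \leq r_{\xb}(\yb)$, a \emph{constant} independent of $\ub$ and $t$; integrating from $0$ to $1$ gives $e^{-r_{\xb}(\yb)}\psi(0) \leq \psi(1) \leq e^{r_{\xb}(\yb)}\psi(0)$. Since $\ub$ is arbitrary, this is exactly \eqref{eq:pro2}.

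The local-norm bound \eqref{eq:pro1} is then immediate: taking $\ub = \yb-\xb$ in \eqref{eq:pro2} gives $e^{-r_{\xb}(\yb)}\lambda_{\xb}(\yb)^2 \leq \lambda_{\yb}(\xb)^2 \leq e^{r_{\xb}(\yb)}\lambda_{\xb}(\yb)^2$, and taking square roots produces the $r_{\xb}(\yb)/2$ exponents in (a). For the gradient bound \eqref{eq:pro3} I would use the fundamental theorem of calculus to write $\iprods{\nabla f(\yb)-\nabla f(\xb), \yb-\xb} = \int_0^1 \norm{\yb-\xb}_{\xb_t}^2\,dt$. Applying \eqref{eq:pro2} to the pair $(\xb,\xb_t)$, for which $r_{\xb}(\xb_t) = t\,r_{\xb}(\yb)$, bounds the integrand between $e^{-t r_{\xb}(\yb)}\lambda_{\xb}(\yb)^2$ and $e^{t r_{\xb}(\yb)}\lambda_{\xb}(\yb)^2$; the elementary integrals $\int_0^1 e^{\pm t r}\,dt$ equal $\gamma_{*}(r)$ and $\gamma(r)$ respectively, giving (c).

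Finally, \eqref{eq:pro4} follows from (c) by one more integration. Starting from $f(\yb)-f(\xb)-\iprods{\nabla f(\xb),\yb-\xb} = \int_0^1 \iprods{\nabla f(\xb_t)-\nabla f(\xb),\yb-\xb}\,dt$ and using the collinearity $\xb_t-\xb = t(\yb-\xb)$, I would rewrite the integrand as $t^{-1}\iprods{\nabla f(\xb_t)-\nabla f(\xb),\xb_t-\xb}$ so that (c) applies with $\yb\mapsto\xb_t$; since $r_{\xb}(\xb_t)=tr$ and $\lambda_{\xb}(\xb_t)^2 = t^2\lambda_{\xb}(\yb)^2$ with $r:=r_{\xb}(\yb)$, this bounds the integrand between $t\gamma_{*}(tr)\lambda_{\xb}(\yb)^2$ and $t\gamma(tr)\lambda_{\xb}(\yb)^2$. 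The closing computations $\int_0^1 t\gamma(tr)\,dt = r^{-2}(e^{r}-r-1) = \omega(r)$ and $\int_0^1 t\gamma_{*}(tr)\,dt = r^{-2}(e^{-r}+r-1) = \omega_{*}(r)$ yield (d); the strict convexity and monotonicity of $\omega$ and $\omega_{*}$ is a routine check of their first two derivatives.

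The main obstacle is the clean differential-inequality step in (b): the decisive feature of the self-concordant-\emph{like} class is that the definition controls $\vert\psi'\vert$ by $r_{\xb}(\yb)\,\psi$ with a \emph{constant} coefficient, so that $\ln\psi$ is Lipschitz and integrates to an exponential factor $e^{\pm r_{\xb}(\yb)}$. This is in sharp contrast to genuine self-concordance, where the analogous coefficient scales like $\psi^{1/2}$ and forces a different, non-exponential functional form. Verifying that the directional third-derivative identity and Theorem~\ref{th:equiv_def_sclfunc} combine to give exactly this constant-coefficient inequality, uniformly in $\ub$, is the crux of the argument; everything downstream is the integration of explicit scalar functions along the segment $[\xb,\yb]$.
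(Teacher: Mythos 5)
Your proof is correct and follows essentially the same route as the paper: the same differential inequality $\vert(\ln\psi)'\vert\le M_f\norm{\yb-\xb}_2$ along the segment for the Hessian and local-norm bounds, followed by the same two successive integrations to obtain (c) and then (d). The only cosmetic difference is that you obtain (a) as a corollary of (b) by specializing $\ub=\yb-\xb$, whereas the paper proves (a) directly from Definition~\ref{de:self_concordant_type}; the underlying computation is identical.
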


\begin{proof}
We denote $\mathbf{y}_t := \xb + t (\mathbf{y} - \mathbf{x} )$ for notational convenience, where $t \in [0, 1]$.
First we prove  \eqref{eq:pro1}. Consider the function $\phi(t) := \log\left( \iprods{\nabla^2{f}(\mathbf{y}_t )\ub,\ub}\right)$ for some $\ub \in \R^n$. 
We have, by Definition \ref{de:self_concordant_type}, that:
\begin{equation*}
\left\vert \phi'(t) \right\vert = \frac{\left\vert D^3f( \mathbf{y}_t )[\ub, \ub, \ub] \right\vert}{D^2f( \mathbf{y}_t )[\ub, \ub]} \leq M_f \left\Vert \ub \right\Vert_2.
\end{equation*}
Set $\ub = \mathbf{y} - \mathbf{x}$, and then we have $\phi ( 0 ) = \log \left( \left\Vert \mathbf{y} - \mathbf{x} \right\Vert_{\xb}^2 \right)$ and $\phi ( 1 ) = \log \left( \left\Vert \mathbf{y} - \mathbf{x} \right\Vert_{\yb}^2 \right)$.  
Integrating $\phi'(\cdot)$ over the interval $[0,1]$, we obtain:
\begin{equation}
\left\vert \log \left( \left\Vert \mathbf{y} - \mathbf{x} \right\Vert_{\yb}^2 \right) - \log \left( \left\Vert \mathbf{y} - \mathbf{x} \right\Vert_{\xb}^2 \right) \right\vert \leq M_f \left\Vert \mathbf{y} - \mathbf{x} \right\vert_2, \notag
\end{equation}
which leads to (\ref{eq:pro1}).

Next, we prove (\ref{eq:pro2}). 
Consider the function $\psi( t ) := \iprods{\nabla^2 f( \mathbf{y}_t )\ub, \ub} = \left\Vert\ub \right\Vert_{\mathbf{y}_t}^2$ for some $\ub \in \R^n$. 
We have $\psi( 0 ) = \left\Vert \ub \right\Vert_{\xb}^2$ and $\psi( 1 ) = \left\Vert\ub\right\Vert_{\yb}^2$. 
By Theorem \ref{th:equiv_def_sclfunc}, we obtain:
\begin{equation}
\left\vert \psi'(t) \right\vert = \left\vert D^3 f( \mathbf{y}_t ) [ \mathbf{y} - \mathbf{x}, \ub, \ub] \right\vert \leq M_f \left\Vert \mathbf{y} - \mathbf{x} \right\Vert_2 \psi( t ), \notag
\end{equation}
or, equivalently, 
\begin{equation}
\left\vert \frac{d\ln \psi(t)}{dt} \right\vert \leq M_f \left\Vert \mathbf{y} - \mathbf{x} \right\Vert_2. \notag
\end{equation}
We get (\ref{eq:pro2}) by integrating both sides over $[0, 1]$.

Now, we prove \eqref{eq:pro3}. By the mean-value theorem, we have:
\begin{equation}
\iprods{{f}(\mathbf{y}) - \nabla{f}(\mathbf{x}), \mathbf{y} - \mathbf{x}} = \int_0^1\iprods{\nabla^2f( \mathbf{y}_t )\mathbf{x}, \mathbf{x}}dt. \notag
\end{equation} 
Applying the right-hand side of \eqref{eq:pro2}, we obtain:
\begin{align*}
\int_0^1\iprods{\nabla^2f(\mathbf{y}_t)\mathbf{x}, \mathbf{x}}dt \leq \int_0^1 \exp \left( M_f \left\Vert \mathbf{y}_t - \xb \right\Vert_2 \right)\iprods{\nabla^2{f}(\mathbf{x})\mathbf{x}, \mathbf{x}}dt, \notag
\end{align*}
which leads to the right-hand side of \eqref{eq:pro3}. Similarly, we can prove the left-hand side of \eqref{eq:pro3}.

Finally, \eqref{eq:pro4} is a direct consequence of (\ref{eq:pro3}), since:
\begin{equation}
f(\mathbf{y}) - f(\mathbf{x}) - \iprods{\nabla{f}(\mathbf{x}), \yb - \mathbf{x}} = \int_0^1 \frac{1}{t} \iprods{\nabla{f}( \mathbf{y}_t ) - \nabla{f}(\mathbf{x}), \mathbf{y}_t - \xb}dt. \notag
\end{equation}
Hence, all the statements of Theorem \ref{th:properties1}  are proved.
\Eproof
\end{proof}

\subsection{Proof of Lemma \ref{le:existence_sol}: The existence and uniqueness of $\xopt$}

Consider the level set $\mathcal{L}_F(\mathbf{x}) := \set{\yb \in\dom{F} : F(\mathbf{y})  \leq F(\mathbf{x})}$. 
By \eqref{eq:pro4} and the convexity of $g$, for any $\yb \in \mathcal{L}_{F(\mathbf{x})}$ and $\vb \in \partial g(\mathbf{x})$, we have:
\begin{equation}
F(\xb) \geq F(\yb) \geq F(\xb) + \iprods{\nabla{f}(\mathbf{x}) + \mathbf{v}, \yb - \mathbf{x}} + \omega_{*}(r_{\xb}(\mathbf{y}))\lambda_{\xb}(\mathbf{y})^2, \notag
\end{equation}
where we use the notations $r_{\xb}(\yb)$ and $\lambda_{\xb}(\yb)$ in Theorem \ref{th:properties1}. 
Applying the Cauchy-Schwarz inequality and we obtain:
\begin{equation}
\omega_{*}(r_{\xb}(\mathbf{y}))\lambda_{\xb}(\mathbf{y}) \leq \left\Vert \nabla f(\xb) + \vb \right\Vert_{\xb}^{*}. \notag
\end{equation}
Note that $\lambda_{\xb}(\yb) \geq \sqrt{\sigma_{\min}} \left\Vert \xb - \yb \right\Vert_2$, where $\sigma_{\min}$ is the smallest eigenvalue of $\nabla^2 f(\xb)$. 
This inequality implies that:
\begin{equation}\label{eq:equation_e}
\omega_{*}(r_{\xb}(\mathbf{y}))r_{\xb}(\mathbf{y}) \leq ( M_f / \sqrt{\sigma_{\min}} ) \left\Vert \nabla f(\xb) + \vb \right\Vert_{\xb}^{*} = ( M_f / \sqrt{\sigma_{\min}} )\lambda(\xb). 
\end{equation}
The function $\psi(t) := t\omega_{*}(t)$ is increasing in $[0, +\infty)$ and its values are also in $[0, 1)$.
Moreover, $\psi(t)\to 1^{-}$ as $t\to +\infty$.
So its inverse $\psi^{-1}$ is also increasing in $[0, 1)$. 
Therefore, if $( M_f / \sqrt{\sigma_{\min}} ) \left\Vert \nabla f(\xb) + \vb \right\Vert_{\xb}^{*} < 1$, then the equation $\psi(t) - ( M_f / \sqrt{\sigma_{\min}} )\lambda(\xb) = 0$ has a unique solution $t^{\ast} > 0$.
Hence, if $r_{\xb}(\yb) \leq t^{\ast}$, then \eqref{eq:equation_e} holds.
This implies that the level set $\mathcal{L}_f(\mathbf{x})$ is bounded, and thus problem \eqref{eq:composite_min_Fx} has a solution $\xopt$. 

Let $\yb \neq \xopt$ be a point in $\dom{f}$. 
By (\ref{eq:pro4}) and the optimality condition (\ref{eq:optimality2}), for any $\vb^{\star} \in \partial{g}(\xopt)$, we have:
\begin{align}
\begin{array}{ll}
f(\yb) &\geq f(\xopt) + \iprods{\nabla{f}(\xopt), \mathbf{y} - \xopt} + \omega_{*}(r_{\xopt}(\mathbf{y}))\lambda_{\xopt}(\mathbf{y})^2,\\
g(\yb) &\geq g(\xopt) + \iprods{\vb^{\star}, \yb - \xopt} \overset{\tiny\eqref{eq:optimality2}}{=} g(\xopt) - \iprods{\nabla f(\xopt), \mathbf{y} - \xopt}. 
\end{array}
\end{align}
Summing up the two inequalities, we obtain
\begin{equation}
F(\yb) \geq F(\xopt) + \omega_{*}(r_{\xopt}(\mathbf{y}))\lambda_{\xopt}(\mathbf{y})^2.
\end{equation}
By \eqref{eq:pro2} and the non-singularity of $\nabla^2f(\xb)$ for some $\xb\in\dom{f}$, the function $f$ is strictly convex, 
and the uniqueness of $\xopt$ follows.
\eofprove

\subsection{Proof of Lemma \ref{le:grad_alg}: Step-size selection strategy}
Since $\sb^k$ is the solution of the convex subproblem (\ref{eq:subprob1}), we have $0 \in \nabla f( \xb^k ) + \mathbf{D}_k\db^k + \partial g( \sb^k )$, or, equivalently:
\begin{equation}
- ( \nabla f( \xb^k ) + \mathbf{D}_k\db^k ) \in \partial g( \sb^k ). \label{eq:optimal_condition}
\end{equation}
Using  \eqref{eq:optimal_condition} and \eqref{eq:pro4}, we can derive:
\begin{align}\label{eq:proof_a1}
f(\xb^{k+1}) &\leq f(\xb^k) + \alpha_k\iprods{\nabla{f}(\xb^k), \db^k} + \omega\left(\alpha_kr_k\right)\alpha_k^2\lambda_k^2.
\end{align}
Since $\xb^{k+1} = (1-\alpha_k)\xb^k + \alpha_k\sb^k$, it follows by the convexity of $g$ and \eqref{eq:optimal_condition} that:
\begin{align}\label{eq:proof_a2}
g(\xb^{k+1}) &\leq g(\xb^k) - \alpha_k\iprods{ \nabla f( \xb^k ) + \mathbf{D}_k\db^k, \db^k}.
\end{align}
Summing up \eqref{eq:proof_a1} and \eqref{eq:proof_a2}, we obtain the following estimate:
\begin{align}\label{eq:proof_a3}
F(\xb^{k+1}) &\leq F(\xb^k) - \psi_k( \alpha_k ), \notag
\end{align}
where $\psi_k(\tau) := \beta^2_k\tau - \lambda^2_k\omega(r_k\tau)\tau^2$. 
It is easy to check that the function $\psi_k$ is concave, and attains the maximum at $\tau^{*}_k = \frac{1}{r_k}\ln\left(1 + \frac{r_k\beta_k^2}{\lambda_k^2} \right)$ with the maximum value:
\begin{equation*}
\psi_k(\tau^{*}_k) = \frac{\beta_k^2}{r_k}\left[\left(1 + \frac{\lambda^2}{r_k\beta_k^2}\right)\ln\left(1 + \frac{\beta_k^2 r_k}{\lambda_k^2}\right) - 1\right].
\end{equation*}
Moreover, $\tau_k^{*} \leq 1$ due to the condition $\beta_k^2r_k \leq (e^{r_k}-1)\lambda_k^2$. 
By choosing $\alpha_k = \tau_k^*$, we obtain \eqref{eq:grad_descent}. 
Since $\alpha_k$ maximizes $\psi_k$, it is optimal in the sense of the worst-case performance. 
\eofprove

\subsection{Proof of Theorem \ref{th:convergence_of_prox_grad}: Local linear convergence}

\noindent\textbf{Scaled proximal operators.}
In order to prove Theorem \ref{th:convergence_of_prox_grad} and Theorem \ref{th:prox_newton_scheme}, we introduce the notion of scaled proximal operators here.
\begin{definition}
Let $\Hb\in\Sc^n_{++}$ be a positive definite matrix, and $g$ be a proper, lower semi-continuous convex function. 
We define the operator $\mathcal{P}_{\Hb}^g (\ub)$ as:
\begin{equation}
\mathcal{P}_{\Hb}^g(\ub) = \left(\Hb + \partial{g}\right)^{-1} := \mathrm{arg}\!\min_{\xb}\set{g(\mathbf{x}) + (1/2)\iprods{\Hb\xb,\xb} - \iprods{\ub, \xb}}. \notag
\end{equation}
\end{definition}
We refer to $\mathcal{P}_{\Hb}^g$ as a scaled proximity operator. 
Note that if $\Hb$ is the identity matrix, $\mathcal{P}_{\Hb}^g$ collapses to the standard proximal operator \cite{Rockafellar1970}.
For $\Hb\in\Sc^n_{++}$, we define the weighted norm of $\xb$ as $\norm{\xb}_{\Hb} := \iprods{\Hb\xb, \xb}^{1/2}$ and its dual norm of $\yb$ as $\norm{\yb}_{\Hb}^{*} := \iprods{\Hb^{-1}\yb, \yb}^{1/2}$.

\begin{lemma} \label{le:non_expansive}
Let $g$ be a proper, lower semi-continuous convex function, and let $\Hb$ be a positive definite matrix. 
The mapping $\mathcal{P}_{\Hb}^g$ is non-expansive in terms of the norm defined by $\Hb$, i.e.:
\begin{equation}\label{eq:nonexpansive_P}
\norm{\mathcal{P}_{\Hb}^g(\ub) - \mathcal{P}_{\Hb}^g(\vb)}_{\Hb} \leq \norm{\ub-\vb}_{\Hb}^{*}, ~~\forall \ub, \vb.
\end{equation} 
\end{lemma}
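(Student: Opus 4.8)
The plan is to mimic the classical argument for the non-expansiveness of the proximal operator, replacing the Euclidean inner product by the pairing induced by $\Hb$. First I would record the first-order optimality conditions for the two proximal points. Setting $\pb := \mathcal{P}_{\Hb}^g(\ub)$ and $\qb := \mathcal{P}_{\Hb}^g(\vb)$, the definition of the scaled proximity operator, together with the subdifferential optimality condition for the strongly convex subproblem, yields the inclusions
\[
\ub - \Hb\pb \in \partial g(\pb) \qquad\text{and}\qquad \vb - \Hb\qb \in \partial g(\qb).
\]

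Next I would invoke the monotonicity of the subdifferential of the convex function $g$. Pairing the difference of the two inclusions with $\pb - \qb$, monotonicity of $\partial{g}$ gives $\iprods{(\ub - \Hb\pb) - (\vb - \Hb\qb), \pb - \qb} \geq 0$, which rearranges to
\[
\iprods{\Hb(\pb - \qb), \pb - \qb} \leq \iprods{\ub - \vb, \pb - \qb},
\]
that is, $\norm{\pb - \qb}_{\Hb}^2 \leq \iprods{\ub - \vb, \pb - \qb}$.

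The third step is to bound the right-hand side by a generalized Cauchy–Schwarz inequality matched to the metric $\Hb$. Writing $\iprods{\ub - \vb, \pb - \qb} = \iprods{\Hb^{-1/2}(\ub-\vb), \Hb^{1/2}(\pb-\qb)}$ and applying the ordinary Cauchy–Schwarz inequality in $\R^n$ gives
\[
\iprods{\ub-\vb,\pb-\qb} \leq \norm{\Hb^{-1/2}(\ub-\vb)}_2\,\norm{\Hb^{1/2}(\pb-\qb)}_2 = \norm{\ub-\vb}_{\Hb}^{*}\,\norm{\pb-\qb}_{\Hb},
\]
where I used that $\norm{\cdot}_{\Hb}^{*} = \iprods{\Hb^{-1}\cdot,\cdot}^{1/2}$ is exactly the dual norm and $\norm{\cdot}_{\Hb} = \iprods{\Hb\cdot,\cdot}^{1/2}$. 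Combining this with the previous display and cancelling one factor of $\norm{\pb-\qb}_{\Hb}$ (the claim being trivial when this quantity vanishes) yields \eqref{eq:nonexpansive_P}.

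I do not anticipate a serious obstacle here: the only point requiring care is keeping the $\Hb$-norm and its dual straight, so that the factorization through $\Hb^{1/2}$ and $\Hb^{-1/2}$ produces precisely $\norm{\cdot}_{\Hb}$ on one factor and $\norm{\cdot}_{\Hb}^{*}$ on the other. Positive definiteness of $\Hb$ guarantees that $\Hb^{1/2}$ and $\Hb^{-1/2}$ are well defined, so every step above is justified.
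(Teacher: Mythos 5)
Your argument is correct and is essentially the paper's own proof: the same optimality inclusions, the same use of monotonicity of $\partial g$ to get $\norm{\pb-\qb}_{\Hb}^2 \leq \iprods{\ub-\vb,\pb-\qb}$, and the same generalized Cauchy--Schwarz step (which the paper states tersely and you justify explicitly via the $\Hb^{\pm 1/2}$ factorization). No gaps.
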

\begin{proof}
Let $p := \mathcal{P}_{\Hb}^g(\ub)$ and $q := \mathcal{P}_{\Hb}^g(\vb)$. 
We have $\ub - \Hb\mathbf{p} \in \partial{g}(\mathbf{p})$ and $\vb - \Hb\mathbf{q} \in \partial{g}(\mathbf{q})$. 
By the convexity of $g$, we have $\iprods{\ub - \vb - \Hb\mathbf{p} + \Hb\mathbf{q}, \mathbf{p}-\mathbf{q}} \geq 0$. 
This implies that  $\iprods{\mathbf{p} - \mathbf{q}, \ub - \vb} \geq \norm{\mathbf{p} - \mathbf{q}}_{\Hb}^2$. 
By the Cauchy-Schwarz inequality, we obtain $\norm{\ub-\vb}^{*}_{\Hb} \geq \norm{\mathbf{p} - \mathbf{q}}_{\Hb}$, which proves the theorem.
\Eproof
\end{proof}

Let us consider the distance between $\xb^{k+1}$ and $\xopt$ measured by $\left\Vert \xb^{k+1} - \xopt \right\Vert_{\xopt}$. 
By the definition of $\xb^{k+1}$, we have: 
\begin{equation} \label{eq:bound_x_k+1_x*}
\left\Vert \xb^{k+1} - \xopt \right\Vert_{\xopt} \leq ( 1 - \alpha_k ) \left\Vert \xb^k - \xopt \right\Vert_{\xopt} + \alpha_k \left\Vert \sb^k - \xopt \right\Vert_{\xopt}. 
\end{equation}
We then derive an upper bound of $\left\Vert \sb^k - \xopt \right\Vert_{\xopt} $ in terms of $\left\Vert \xb^k - \xopt \right\Vert_{\xopt}$.
We define $\Pc_{\xopt} (\ub) := \mathcal{P}_{\Hb_{\star}}^g (\ub)$ with $\Hb_{\star} := \nabla^2f( \xopt )$, $S_{\xopt}(\ub) := \nabla^2f(\xopt)\ub - \nabla{f}(\ub)$, and $e_{\xopt}(\ub, \vb) := \left[ \nabla^2f(\xopt) - \Db_k \right]\left(\vb - \ub \right)$. 
It follows from the optimality conditions \eqref{eq:optimality2} and \eqref{eq:optimal_condition} that $\sb^k = \Pc_{\xopt} \left(S_{\xopt}(\xb^k) + e_{\xopt}(\sb^k, \xb^k) \right)$ and $\xopt = \Pc_{\xopt}(S_{\xopt}(\xopt))$. 
By Lemma \ref{le:non_expansive} and the triangle inequality, we obtain:
\begin{equation} \label{eq:bound_s_k_x*}
\left\Vert \sb^k - \xopt \right\Vert_{\xopt} \leq \norm{S_{\xopt}(\xb^k) - S_{\xopt}(\xopt)}^{*}_{\xopt} + \norm{e_{\xopt}(\xb^k, \sb^k)}^{*}_{\xopt}. 
\end{equation}
Let $\tilde{r}_k := M_f \left\Vert \xb^k - \xopt \right\Vert_2$, we frist bound the term $\left\Vert S_{\xopt}\left( \xb^k \right) - S_{\xopt}\left( \xopt \right) \right\Vert_{\xopt}^*$ as follows:
\begin{equation}\label{eq:th1_est1}
\left\Vert S_{\xopt}\left( \xb^k \right) - S_{\xopt}\left( \xopt \right) \right\Vert_{\xopt}^* \leq \frac{e^{\tilde{r}_k} - \tilde{r}_k - 1}{\tilde{r}_k} \left\Vert \xb^k - \xopt \right\Vert_{\xopt}.
\end{equation}
Indeed, let us write, for notational convenience, $\mathbf{G}_k := \nabla^2 f\left( \xopt + t \left( \xb^k - \xopt \right) \right) - \nabla^2 f( \xopt )$ and $\Hb_k := \nabla^2 f( \xopt )^{-1/2}\mathbf{G}_k \nabla^2 f( \xopt )^{1/2}$. By the definition of $S_{\xopt}$, we have:
\begin{equation}\label{eq:th1_est1b}
S_{\xopt}\left( \xb^k \right) - S_{\xopt}\left( \xopt \right) = \int_0^1 \mathbf{G}_k \left( \xb^k - \xopt \right)dt.
\end{equation}
By applying the bound \eqref{eq:pro2}, we get:
\begin{equation*}
\left( \frac{1 - e^{-\tilde{r}_k}}{\tilde{r}_k} - 1 \right) \nabla^2 f\left( \xopt \right) \preceq G_k \preceq \left( \frac{e^{ \tilde{r}_k} - 1 }{\tilde{r}_k} - 1 \right) \nabla^2 f\left( \xopt \right),
\end{equation*}
which implies:
\begin{equation}\label{eq:th1_est1c}
\left\Vert \Hb_k \right\Vert \leq \max\left\{ \frac{1 - e^{-\tilde{r}_k}}{\tilde{r}_k} - 1 , \frac{e^{ \tilde{r}_k} - 1 }{\tilde{r}_k} - 1 \right\} = \frac{e^{\tilde{r}_k} - \tilde{r}_k - 1}{\tilde{r}_k}.
\end{equation}
From \eqref{eq:th1_est1b}, we can easily show that:
\begin{equation}
\left\Vert S_{\xopt}\left( \xb^k \right) - S_{\xopt}\left( \xopt \right) \right\Vert_{\xopt}^{*} \leq \left\Vert \Hb_k \right\Vert \left\Vert \xb^k - \xopt \right\Vert_{\xopt},
\end{equation}
which is combined with \eqref{eq:th1_est1c} to obtain \eqref{eq:th1_est1}.

Next, we bound the second term $\norm{e_{\xopt}(\xb^k, \sb^k)}^{*}_{\xopt}$ of \eqref{eq:bound_s_k_x*} as follows:
\begin{equation}\label{eq:th1_est2}
\norm{e_{\xopt}(\xb^k, \sb^k)}^{*}_{\xopt} \leq \rho_{*}\left(\norm{\sb^k - \xopt}_{\xopt} + \norm{\xb^k - \xopt}_{\xopt}\right),
\end{equation}
where $\rho_{*} := \max\left\{ L_k / \sigma_{\min}^*- 1, 1 - L_k / \sigma_{\max}^* \right\}$.
Indeed, let us define the matrix:
\begin{equation*}
\begin{array}{ll}
\tilde{\Hb}_{*} &:= \nabla^2f(\xopt)^{-1/2}\left[\nabla^2f(\xopt) - \Db_k\right]\nabla^2f(\xopt)^{-1/2}\\
& = \mathbb{I} - \nabla^2f(\xopt)^{-1/2}\Db_k\nabla^2f(\xopt)^{-1/2}. 
\end{array}
\end{equation*}
Then $\rho_*$ is the largest singular value of $\tilde{\Hb}_{*}$, and:
\begin{equation*}
\norm{e_{\xopt}(\xb^k, \sb^k)}^{*}_{\xopt} \leq \rho_{*} \norm{\sb^k - \xb^k}_{\xopt} \leq \rho_{*}\left(\norm{\sb^k - \xopt}_{\xopt} + \norm{\xb^k - \xopt}_{\xopt}\right), \end{equation*}
which proves \eqref{eq:th1_est2}.

Finally, suppose that $\rho_{*} \in (0, 1)$, by \eqref{eq:bound_x_k+1_x*}), \eqref{eq:bound_s_k_x*}, \eqref{eq:th1_est1}, and \eqref{eq:th1_est2}, we have the upper bound $\left\Vert x^{k + 1} - \xopt \right\Vert_{\xopt} \leq \gamma_k \left\Vert \xb^k - \xopt \right\Vert_{\xopt}$, where:
\begin{equation}
\gamma_k :=  \left\{1-\alpha_k  + \alpha_k\left[\frac{e^{\tilde{r}_k} - \tilde{r}_k - 1}{(1-\rho_{*})\tilde{r}_k}  + \frac{\rho_{*}}{1-\rho_{*}}\right]\right\}. \notag
\end{equation}
Therefore, with a proper choice of $L_k$ such that $\rho_* \in [0, 1/2)$, and for $\tilde{r}_k$ sufficiently small such that $\gamma_k < 1$, the sequence $\set{\xb^k}_{k\geq 0}$ generated by Algorithm \ref{alg:A1} converges linearly to $\xopt$.
\eofprove

\subsection{Proof of Theorem \ref{th:prox_newton_scheme}: Local quadratic convergence}
Let us define $\Pc_{\xb^k} (\ub) := P^g_{\nabla^2 f( \xb^k)} (\ub)$ with $\Hb_k := \nabla^2 f( \xb^k )$, $S_{\xb^k}(\ub) := \nabla^2 f( \xb^k )\ub - \nabla f(\ub)$, and $e_{\xb^k}(\ub, \vb) := \left[ \nabla^2 f\left( \xb^k \right) - \nabla^2 f(\ub) \right] \left(\vb - \ub\right)$. 
Since $\sb^k$ is the minimizer of (\ref{eq:subprob2}), we have $0 \in \nabla f\left( \xb^k \right) + \nabla^2 f\left( \xb^k \right) \left( \sb^k - \xb^k \right) + \partial g\left( \sb^k \right)$. Using this optimality condition and the condition $\alpha_k = 1$, we can write: 
\begin{equation}\label{eq:th2_proof1}
\begin{array}{ll}
\xb^{k+1} &= \Pc_{\xb^k} \left( S_{\xb^k}\left( \xb^k \right) + e_{\xb^k}\left( \xb^k, \sb^k \right) \right) \equiv \sb^k\\
\sb^{k+1} &= \Pc_{\xb^k} \left( S_{\xb^k}\left( \xb^{k+1} \right) + e_{\xb^k}\left( \xb^{k+1}, \sb^{k+1} \right) \right). 
\end{array}
\end{equation}
We define $\tilde{\lambda}_{k+1} := \left\Vert \sb^{k+1} - \xb^{k+1} \right\Vert_{\xb^k}$. 
It follows by Lemma \ref{le:non_expansive} and the triangle inequality that:
\begin{equation} \label{eq:bound_on_lambda_k+1}
\tilde{\lambda}_{k+1} \leq \left\Vert S_{\xb^k}\left( \xb^{k+1} \right) - S_{\xb^k}\left( \xb^k \right) \right\Vert_{\xb^k}^* + \left\Vert e_{\xb^k} \left( \sb^{k+1},\sb^{k+1} \right)  - e_{\xb^k} \left(\xb^k, \sb^k \right)\right\Vert_{\xb^k}^*
\end{equation}
We can prove, in a similar way as in the proof of \eqref{eq:th1_est1}, that the first term of \eqref{eq:bound_on_lambda_k+1} can be upper-bounded as:
\begin{equation} \label{eq:bound_on_S_k_S_k+1}
\left\Vert S_{\xb^k}\left( \xb^{k+1} \right) - S_{\xb^k}\left( \xb^k \right) \right\Vert_{\xb^k}^* \leq \frac{e^{r_k} - r_k - 1}{r_k} \left\Vert \xb^{k+1} - \xb^k \right\Vert_{\xb^k}.
\end{equation}
By using \eqref{eq:pro2} and the fact that $e_{\xb_k}(\xb^k,\sb^k) = 0$, the second term of \eqref{eq:bound_on_lambda_k+1} can be upper-bounded as:
\begin{align} \label{eq:bound_on_e_k}
\left\Vert e_{\xb^k} \left( \xb^{k+1}, \sb^{k+1} \right) \right\Vert_{\xb^k}^{*} &\leq \max \{ e^{-r_k} - 1, e^{r_k} - 1 \} \left\Vert \sb^{k+1} - \xb^{k+1} \right\Vert_{\xb^k} \nonumber\\
& = \left( e^{r_k} - 1 \right) \tilde{\lambda}_{k+1}.
\end{align}
Combining \eqref{eq:bound_on_lambda_k+1}, \eqref{eq:bound_on_S_k_S_k+1}, and \eqref{eq:bound_on_e_k} and assuming that $e^{r_k} < 2$, we have:
\begin{equation}
\tilde{\lambda}_{k+1} \leq \frac{e^{r_k} - r_k - 1}{\left( 2 - e^{r_k} \right) r_k} \lambda_k. \notag
\end{equation}
By using \eqref{eq:pro2}, we estimate $\lambda_{k+1}$ as:
\begin{equation}
\lambda_{k+1}^2  := \left\Vert \sb^{k+1} - \xb^{k+1} \right\Vert_{\xb^{k+1}}  \leq e^{r_k} \tilde{\lambda}_{k+1}^2, \notag
\end{equation}
and thus, provided that $r_k \leq \ln ( 2 )$, we obtain an upper bound for $\lambda_{k+1}$ as:
\begin{equation} \label{eq:final_bound_on_lambda_k+1}
\lambda_{k+1} \leq \frac{ e^{r_k / 2} \left( e^{r_k} - r_k - 1 \right) }{ r_k \left( 2 - e^{r_k} \right) } \lambda_k. \notag
\end{equation}
Denote by $\sigma_{\min}^k$ the smallest eigenvalue of $\nabla^2 f\left( \xb^k \right)$. We have $\left( \sigma_{\min}^{k+1} \right) ^{-1} \leq e^{r_k} \left( \sigma_{\min}^k \right) ^{-1}$ by (\ref{eq:pro2}), which, combining with (\ref{eq:final_bound_on_lambda_k+1}), gives us:
\begin{equation}\label{eq:bound_lambda_sqrt_sigma}
\frac{\lambda_{k+1}}{\sqrt{\sigma_{\min}^{k+1}}} \leq \frac{e^{r_k} \left( e^{r_k} - r_k - 1 \right)}{ r_k \sqrt{\sigma_{\min}^k} \left( 2 - e^{r_k} \right) } \lambda_k, 
\end{equation}
provided that $r_k \leq \ln(2)$. 
Finally, it is easy to check that if $r_k \leq \ln ( 4/3 ) \approx 0.28768207$, then:
\begin{equation}
\frac{e^{r_k} \left( e^{r_k} - r_k - 1 \right)}{ r_k \left( 2 - e^{r_k} \right) } \leq 2 r_k. \notag
\end{equation}
Furthermore, we note that $\lambda_k \geq \sqrt{\sigma_{\min}^k} ( r_k / M_f )$. 
Substituting these estimates into \eqref{eq:bound_lambda_sqrt_sigma} we obtain the conclusions of Theorem \ref{th:prox_newton_scheme}.
\Eproof

\subsection{Proof of Lemma \ref{le:self_concordant_of_L}: Self-concordant-like property}
The concavity of $f$ is straightforward. 
We now prove that $f$ is self-concordant-like.
Consider the function $\psi(t) := \log\left(\sum_{i=1}^me^{\mathbf{a}_it + \mu_i}\right)$, where $\mathbf{a} := (\mathbf{a}_1, \cdots, \mathbf{a}_m )$ is a fixed $m$-dimensional real vector. 
We define the polynomial $P(t; \mathbf{a}^k) := \mathbf{a}_1^ke^{\mathbf{a}_1t+\mu_1} + \cdots + \mathbf{a}_n^ke^{\mathbf{a}_nt + \mu_n}$. 
Then we have $\psi(t) = \log P(t, \mathbf{a}^0)$, and for any $k\geq 0$, $P(t; \mathbf{a}^k)_t^{\prime} := \frac{dP(t; \mathbf{a}^k)}{dt} = P(t, \mathbf{a}^{k+1})$. 

Applying the definition of $P( t, \mathbf{a}^k)$, it is straightforward to obtain the following expressions.
\begin{equation*}
\psi'(t) = \frac{P(t; \mathbf{a}^0)^{\prime}_t}{P(t; \mathbf{a}^0)} = \frac{P(t; \mathbf{a}^1)}{P(t; \mathbf{a}^0)}, ~~\psi''(t) = \frac{P(t; \mathbf{a}^2)P(t; \mathbf{a}^0) - P(t; \mathbf{a}^1)^2}{P(t; \mathbf{a}^0)^2},
\end{equation*}
and
\begin{equation}\label{eq:lm51_proof2}
\psi'''(t) = \frac{P(t; \mathbf{a}^3)P(t; \mathbf{a}^0)^2 - 3P(t; \mathbf{a}^2)P(t; \mathbf{a}^1)P(t; \mathbf{a}^0) + 2P(t; \mathbf{a}^1)^3}{P(t; \mathbf{a}^0)^3}.
\end{equation}
Let us denote $b_i := e^{\mathbf{a}_it + \mu_i}$. Then we can write $\psi''(t)$ as:
\begin{equation*}
\psi''(t) := \frac{\sum_{i < j}(\mathbf{a}_i - \mathbf{a}_j)^2b_ib_j}{(\sum_{i=1}^nb_i)^2} \geq 0,
\end{equation*}
and $\psi'''(t)$ as:
\begin{align}\label{eq:lm51_proof5}
\psi'''(t) = \frac{\sum_{i < j}(\mathbf{a}_i - \mathbf{a}_j)^2b_ib_j\left[\sum_{k=1}^n(\mathbf{a}_i + \mathbf{a}_j - 2\mathbf{a}_k)b_k\right]}{(\sum_{i=1}^nb_i)^3}.
\end{align}
We note that $\abs{\mathbf{a}_i + \mathbf{a}_j - 2\mathbf{a}_k} \leq \sqrt{6}\sqrt{\mathbf{a}_i^2 + \mathbf{a}_j^2 + \mathbf{a}_k^2} \leq \sqrt{6}\norm{\mathbf{a}}_2$ for $i, j, k = 1,\dots, m$, and $b_k \geq 0$ for all $k=1,\dots, m$. Thus we have:
\begin{align*}
\Big\vert\sum_{i=1}(\mathbf{a}_i + \mathbf{a}_j - 2\mathbf{a}_k)b_k\Big\vert \leq \sqrt{6}\norm{\mathbf{a}}_2\sum_{i=1}^mb_i.
\end{align*}
Substituting this inequality into \eqref{eq:lm51_proof5} we obtain:
\begin{equation}\label{eq:lm51_proof7}
\abs{\psi'''(t)} \leq \sqrt{6}\norm{\mathbf{a}}_2\frac{\sum_{i < j}(\mathbf{a}_i - \mathbf{a}_j)^2b_ib_j}{(\sum_{i=1}^nb_i)^2} = \sqrt{6}\norm{\mathbf{a}}_2\psi''(t).
\end{equation}
Let $\tilde{\mathbf{X}}$ be a matrix of size $(m + 1) \times p$, formed from $\tilde{\Xb}^{(1)}, \ldots, \tilde{\Xb}^{(m+1)}$. 
Now, we define the function $\tilde{f}_j ( \tilde{\mathbf{X}} ) := \log\left(\sum_{i=1}^{m+1} e^{\iprods{\wb^{(j)}, \mathbf{X}^{(i)}}}\right)$ for $j = 1, \cdots, N$, and consider $\psi_j(t) := f_j(\mathbf{X} + t\mathbf{d}) = \log\left(\sum_{i=1}^{m+1} e^{\mathbf{a}_it + \mu_i}\right)$ for given vectors $\Xb$ and $\db$, where $\mathbf{a}_i := \iprods{\mathbf{w}^{(j)}, \mathbf{d}^{(i)}}$ and $\mu_i := \iprods{\mathbf{w}^{(j)}, \Xb^{(i)}}$.
We note that:
\begin{equation} \label{eq:bound_on_a}
\mathbf{a}_2 := \left( \sum_{i=1}^{m+1}\mathbf{a}_i^2\right)^{1/2} \leq \Vert\mathbf{w}^{(j)}\Vert_2\norm{\db}_2. 
\end{equation}
By \eqref{eq:lm51_proof7} and (\ref{eq:bound_on_a}), we obtain:
\begin{equation}\label{eq:lm51_proof8}
\abs{\psi_j'''(t)} \leq  \sqrt{6}\norm{\mathbf{a}}_2\psi_j''(t) = \sqrt{6}\Vert\mathbf{w}^{(j)}\Vert_2\norm{\db}_2\psi_j''(t). \notag
\end{equation}
This inequality shows that $f_j$ is $M_j$-self-concordant-like, where $M_j = \sqrt{6}\Vert\mathbf{w}^{(j)}\Vert_2$.
By the definition of $f$ we have $f( \mathbf{X} ) = N^{-1}\left[\mathcal{A}( \mathbf{ \tilde{X} } ) - \sum_{j=1}^Nf_j( \mathbf{\tilde{X}} )\right]$ by setting $\tilde{\Xb}^{(i)} = \Xb^{(i)}$ for $i = 1, \cdots, m$, and restricting $\tilde{\Xb}^{m+1} \equiv 0$, where $\mathcal{A}$ is an affine operator. 
This implies that $f$ is $M_f$-self-concordant-like with the constant $M_f := \sqrt{6}N^{-1}\max\set{\Vert\mathbf{w}^{(j)}\Vert_2 : j=1,\cdots, N}$.
\Eproof

\end{document}